\documentclass[10pt]{amsart}
\textwidth16.3cm
\marginparwidth0cm
\oddsidemargin0cm
\evensidemargin0cm

\usepackage{hyperref, amssymb}
\usepackage{color}
\usepackage[all]{xy}

\theoremstyle{plain}
\newtheorem{theorem}{Theorem}[section]
\newtheorem{proposition}[theorem]{Proposition}
\newtheorem{lemma}[theorem]{Lemma}
\newtheorem{corollary}[theorem]{Corollary}

\theoremstyle{definition}

\newtheorem{remark}[theorem]{Remark}

\newcommand{\f}{\varphi}

\newcommand{\CC}{\mathbb C}
\newcommand{\PP}{\mathbb P}
\newcommand{\ZZ}{\mathbb Z}
\newcommand{\MM}{\mathbf M}

\newcommand{\E}{{\mathcal E}}
\newcommand{\F}{{\mathcal F}}
\newcommand{\G}{{\mathcal G}}
\newcommand{\I}{{\mathcal I}}

\def\O{\mathcal O}

\newcommand{\Ker}{{\mathcal Ker}}
\newcommand{\Coker}{{\mathcal Coker}}
\newcommand{\Image}{{\mathcal Im}}

\newcommand{\Ext}{\operatorname{Ext}}
\newcommand{\Hilb}{\operatorname{Hilb}}
\newcommand{\Hom}{\operatorname{Hom}}
\def\H{\operatorname{H}}
\newcommand{\M}{\operatorname{M}}

\newcommand{\Poly}{\operatorname{P}}
\newcommand{\p}{\operatorname{p}}

\newcommand{\Sym}{\operatorname{S}}
\newcommand{\Syst}{\operatorname{Syst}}

\newcommand{\trans}{{\scriptscriptstyle \operatorname{T}}}

\newcommand{\tensor}{\otimes}

\newcommand{\lra}{\longrightarrow}

\begin{document}

\title[Moduli of stable sheaves supported on curves of genus $3$ in $\PP^1 \times \PP^1$]
{Moduli of stable sheaves supported on curves of genus three contained in a quadric surface}

\author{Mario Maican}
\address{Institute of Mathematics of the Romanian Academy, Calea Grivitei 21, Bucharest 010702, Romania}

\email{maican@imar.ro}

\begin{abstract}
We study the moduli space of stable sheaves of Euler characteristic $1$
supported on curves of arithmetic genus $3$ contained in a smooth quadric surface.
We show that this moduli space is rational.
We compute its Betti numbers by studying the variation of the moduli spaces of $\alpha$-semi-stable pairs.
We classify the stable sheaves using locally free resolutions or extensions.
We give a global description: the moduli space is obtained from a certain flag Hilbert scheme by performing two flips
followed by a blow-down.
\end{abstract}

\subjclass[2010]{Primary 14D20, 14D22}
\keywords{Moduli spaces, Semi-stable sheaves, Wall crossing}

\maketitle

\section{Introduction}
\label{introduction}

Let $\PP^1$ be the projective line over $\CC$ and consider the surface $\PP^1 \times \PP^1$ with fixed polarization
$\O(1, 1) = \O_{\PP^1}(1) \tensor \O_{\PP^1}(1)$.
For a coherent algebraic sheaf $\F$ on $\PP^1 \times \PP^1$, with support of dimension $1$, the Euler characteristic
$\chi(\F(m, n))$ is a polynomial expression in $m$, $n$, of the form
\[
P_{\F}(m, n) = rm + sn + t,
\]
where $r$, $s$, $t$ are integers depending only on $\F$. This is the \emph{Hilbert polynomial} of $\F$.
The \emph{slope} of $\F$ is
\[
\p(\F) = \frac{t}{r+s}.
\]
Let $\M(P)$ be the coarse moduli space of S-equivalence classes of sheaves on $\PP^1 \times \PP^1$
that are semi-stable with respect to the fixed polarization and that have Hilbert polynomial $P$.
We recall that $\F$ is semi-stable, respectively, stable, if it is pure and for any proper subsheaf $\F' \subset \F$ we have $\p(\F') \le \p(\F)$,
respectively, $\p(\F') < \p(\F)$.
According to \cite{lepotier}, $\M(P)$ is projective, irreducible, and smooth at points given  by stable sheaves.
Its dimension is $2rs + 1$ if $r > 0$ and $s > 0$.
The spaces $\M(rm + n + 1)$, $\M(2m + 2n + 1)$ and $\M(2m + 2n + 2)$ were studied in \cite{ballico_huh}.
In fact, it is not difficult to see that $\M(rm + n + 1)$ consists of the structure sheaves of curves of degree $(1, r)$,
so it is isomorphic to $\PP^{2r + 1}$.
The space $\M(3m + 2n + 1)$ was studied in \cite{choi_katz_klemm} and \cite{genus_two}.
We refer to the introductory section of \cite{genus_two} for more background information.

This paper is concerned with the study of $\MM = \M(4m + 2n + 1)$.
The closed points of $\MM$ are in a bijective correspondence with the isomorphism classes $[\F]$ of stable sheaves $\F$
supported on curves of degree $(2, 4)$ and satisfying the condition $\chi(\F) = 1$.
As already mentioned, $\MM$ is a smooth irreducible projective variety of dimension $17$.
For any $t \in \ZZ$, twisting by $\O(t,t)$ gives an isomorphism $\MM \simeq \M(4m + 2n + 6t + 1)$.
According to \cite[Corollary 1]{genus_two}, $\MM \simeq \M(4m + 2n -1)$.
In the following theorem we classify the sheaves in $\MM$.

\begin{theorem}
\label{main_theorem}
The variety $\MM$ can be decomposed into an open subset $\MM_0$, two closed irreducible subsets $\MM_2^{}$, $\MM_2'$,
each of codimension $2$, a locally closed irreducible subset $\MM_3$ of codimension $3$,
and a locally closed irreducible subset $\MM_4$ of codimension $4$.
These subsets are defined as follows: $\MM_0$ is the set of sheaves $\F$ having a resolution of the form
\[
0 \lra \O(-1, -3) \oplus \O(0, -3) \oplus \O(-1, -2) \stackrel{\f}{\lra} \O(0, -2) \oplus \O(0, -2) \oplus \O \lra \F \lra 0,
\]
where the entries $\f_{12}$ and $\f_{22}$ are linearly independent and the maximal minors of the matrix $(\f_{ij})_{i = 1, 2, j = 1, 2, 3}$,
describing the corestriction of $\f$ to the first two summands, have no common factor;
$\MM_2$ is the set of sheaves $\F$ having a resolution of the form
\[
0 \lra \O(-2, -2) \oplus \O(-1, -3) \overset{\f}{\lra} \O(-1, -2) \oplus \O(0, 1) \lra \F \lra 0,
\]
with $\f_{11} \neq 0$, $\f_{12} \neq 0$;
$\MM_2'$ is the set of sheaves $\F$ having a resolution of the form
\[
0 \lra \O(-2, -1) \oplus \O(-1, -4) \overset{\f}{\lra} \O(-1, -1) \oplus \O \lra \F \lra 0,
\]
with $\f_{11} \neq 0$, $\f_{12} \neq 0$;
$\MM_4$ is the set of extensions of the form
\[
0 \lra \O_Q \lra \F \lra \O_L(1, 0) \lra 0
\]
satisfying the condition $\H^0(\F) \simeq \CC$,
where $Q \subset \PP^1 \times \PP^1$ is a quintic curve of degree $(2, 3)$ and $L \subset \PP^1 \times \PP^1$ is a line of degree $(0, 1)$;
$\MM_3$ is the set of extensions of the form
\[
0 \lra \O_Q(p) \lra \F \lra \O_L \lra 0,
\]
where $\O_Q(p)$ is a non-split extension of $\CC_p$ by $\O_Q$ for a point $p \in Q$, and satisfying the condition $\H^0(\F) \simeq \CC$.

Moreover, $\MM_2$ is the Brill-Noether locus of sheaves for which $\H^1(\F) \neq \{ 0 \}$.
\end{theorem}

\noindent
The proof of Theorem \ref{main_theorem}, given in Section \ref{classification}, relies on the Beilinson spectral sequence,
which we recall in Section \ref{preliminaries}.
The varieties $X$ that appear in this paper have no odd homology, so we can define the Poincar\'e polynomial
\[
\Poly(X)(\xi) = \sum_{i \ge 0} \dim_{\mathbb Q}^{} \H^i(X, {\mathbb Q}) \xi^{i/2}.
\]

\begin{theorem}
\label{poincare_polynomial}
The Euler characteristic of $\MM$ is $288$.
The Poincar\'e polynomial of $\MM$ is
\begin{multline*}
\xi^{17} + 3\xi^{16} + 8\xi^{15} + 16\xi^{14} + 21\xi^{13} + 23\xi^{12} + 24\xi^{11} + 24\xi^{10} + 24\xi^9 \\
+ 24\xi^8 + 24\xi^7 + 24\xi^6 + 23\xi^5 + 21\xi^4 + 16\xi^3 + 8\xi^2 + 3\xi + 1.
\end{multline*}
\end{theorem}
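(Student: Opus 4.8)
The plan is to realise $\MM$ at one end of a chain of moduli spaces of pairs and to propagate the computation along the chain. For a rational parameter $\alpha > 0$, let $\M^\alpha$ be the moduli space of $\alpha$-semi-stable pairs $(\F, s)$, where $\F$ is a sheaf with Hilbert polynomial $4m + 2n + 1$ and $s \in \H^0(\F)$ is nonzero. Each $\M^\alpha$ is a projective variety, and there are finitely many walls $0 = \alpha_0 < \alpha_1 < \cdots < \alpha_k < \alpha_{k+1} = +\infty$ such that $\M^\alpha$ is constant, equal to some $\M_i$, on each chamber $(\alpha_i, \alpha_{i+1})$, while crossing a wall $\alpha_i$ effects a flip $\M_{i-1} \dashrightarrow \M_i$. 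I would describe the two extreme spaces $\M_0$ and $\M_k$ explicitly, record the effect of each wall on the Poincar\'e polynomial, and relate $\M_0$ back to $\MM$.

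At the small end ($0 < \alpha \ll 1$), forgetting the section gives a morphism $\rho \colon \M_0 \to \MM$, $(\F, s) \mapsto \F$, with fibre $\PP(\H^0(\F))$ over $[\F]$. By semicontinuity and Theorem \ref{main_theorem}, $\dim \H^0(\F)$ equals $3$ precisely on $\MM_2$, equals $2$ precisely on $\MM_4$, and equals $1$ elsewhere, so $\rho$ is an isomorphism over $\MM \setminus (\MM_2 \cup \MM_4)$, a $\PP^2$-bundle over $\MM_2$, and a $\PP^1$-bundle over $\MM_4$. As all spaces in play have vanishing odd cohomology, additivity of $\Poly$ over the induced stratification yields
\[
\Poly(\M_0) = \Poly(\MM) + (\xi + \xi^2)\,\Poly(\MM_2) + \xi\,\Poly(\MM_4).
\]
I would compute $\Poly(\MM_2)$ and $\Poly(\MM_4)$ from the descriptions in Theorem \ref{main_theorem}: $\MM_2$ is a geometric quotient of an open set of Kronecker-type morphisms $\f$ and resolves into a tower of projective bundles, while $\MM_4$ is a $\PP(\Ext^1)$-bundle over $\PP^{11} \times \PP^1$ (the product of the linear system $|\O(2, 3)|$ with the pencil of lines of bidegree $(0, 1)$), the relevant $\Ext^1$ having constant dimension by Riemann--Roch; both Poincar\'e polynomials are then elementary.

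At the large end ($\alpha \gg 0$), an $\alpha$-stable pair $(\F, s)$ amounts to a curve $C$ in the linear system $|\O(2, 4)| \simeq \PP^{14}$ together with the length-$3$ subscheme $Z \subset C$ cut out by $s$; since $\O(2, 4)$ is $2$-very ample, $\H^0(\O(2, 4)) \to \H^0(\O_Z)$ is surjective for every $Z \in \Hilb^3(\PP^1 \times \PP^1)$, so the map $\M_k \to \Hilb^3(\PP^1 \times \PP^1)$, $(\F, s) \mapsto Z$, realises $\M_k$ as a $\PP^{11}$-bundle, whence
\[
\Poly(\M_k) = \Poly\!\big(\Hilb^3(\PP^1 \times \PP^1)\big) \cdot (1 + \xi + \cdots + \xi^{11}),
\]
the first factor coming from G\"ottsche's formula. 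Finally, each wall $\alpha_i$ corresponds to one of the finitely many splittings $4m + 2n + 1 = P_1 + P_2$ of the Hilbert polynomial into those of a destabilising sub-pair and its quotient sheaf; the flip loci in $\M_{i-1}$ and $\M_i$ are projective bundles over a common base $\M^{\alpha_i}(P_1) \times \M(P_2)$ whose ranks are computed by Riemann--Roch, and the standard wall-crossing formula for pairs (as in the analogous treatment of \cite{genus_two}) gives $\Poly(\M_{i-1}) - \Poly(\M_i)$ in closed form. Summing these corrections from $\alpha = 0^+$ to $+\infty$ expresses $\Poly(\M_0)$ in terms of $\Poly(\M_k)$, and substituting into the displayed relation solves for $\Poly(\MM)$.

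The hard part will be the wall-crossing bookkeeping: one must enumerate every wall, pin down the destabilising sub-pairs and quotients, and evaluate each flip-locus dimension through the corresponding $\Ext^1$ groups --- which may require an auxiliary description of the smaller pair moduli spaces $\M^{\alpha_i}(P_1)$ that occur. A secondary difficulty is to justify the two boundary pictures rigorously, in particular that semistability coincides with stability near $\alpha = 0^+$ and that no degenerate curves, sheaves, or sections disturb the bundle structures of $\M_0$ and $\M_k$ in the extreme chambers.
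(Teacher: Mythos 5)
Your overall strategy --- run the wall-crossing for pairs with Hilbert polynomial $4m+2n+1$ from $\alpha=\infty$ (the relative Hilbert scheme, a $\PP^{11}$-bundle over $\Hilb_{\PP^1\times\PP^1}(3)$) down to $\alpha=0+$, then relate $\M^{0+}(4m+2n+1)$ to $\MM$ by the forgetful map --- is exactly the paper's first half (Propositions \ref{walls}, \ref{flipping_bundles}, \ref{M_infinity} and Theorem \ref{wall_crossing_1}). Where you diverge is the last step. The paper never stratifies $\MM$ by $h^0$ directly: it invokes the Choi--Chung identity $\Poly(\MM)=\Poly(\M^{0+}(4m+2n+1))-\xi\,\Poly(\M^{0+}(4m+2n-1))$ and computes the auxiliary space $\M^{0+}(4m+2n-1)$ by a second, independent wall-crossing (one wall, with both flip loci $\PP^1$-bundles over $\PP^{11}\times\PP^1$, so $\Poly(\M^{0+}(4m+2n-1))=\Poly(\PP^{13})\Poly(\PP^1\times\PP^1)$). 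Your plan replaces this by the explicit Brill--Noether bookkeeping
\[
\Poly(\M^{0+}) = \Poly(\MM) + (\xi+\xi^2)\Poly(\MM_2) + \xi\Poly(\MM_4),
\]
taken from the last sentence of Theorem \ref{main_theorem}. This is where the argument breaks.

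Concretely: $\MM_2$ is the universal curve of bidegree $(2,4)$, hence has dimension $15$, and if $\H^0(\F)\simeq\CC^3$ held on $\MM_2$ then $\M^{0+}$ would contain a $\PP^2$-bundle over a $15$-dimensional locus \emph{in addition to} the open part isomorphic to $\MM\setminus(\MM_2\cup\MM_4)$; the coefficient of $\xi^{17}$ in $\Poly(\M^{0+})$ would then be $2$, contradicting the irreducibility of $\M^{0+}$ ($17$-dimensional, birational to the $\PP^{11}$-bundle over $\Hilb_{\PP^1\times\PP^1}(3)$). Equivalently, your displayed relation forces the coefficient of $\xi^{17}$ in $\Poly(\MM)$ to be $0$, which is absurd. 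In fact resolution (\ref{M_2_resolution}) gives $\H^0(\F)\simeq\H^0(\O(-1,-2)\oplus\O(0,1))\simeq\CC^2$ on $\MM_2$, since $\H^1(\O(-2,-2)\oplus\O(-1,-3))=0$; so the dimension count in the statement you are quoting cannot be used as is. Even after correcting the exponents, the term-by-term identification of $\sum_k\frac{\xi^k-\xi}{\xi-1}\Poly(\{h^0=k\})$ with $\xi\,\Poly(\M^{0+}(4m+2n-1))$ is a delicate global statement about the Brill--Noether strata (one must also decide how $\MM_4$, where $\H^0(\F)\supseteq\H^0(\O_Q(0,1))\simeq\CC^2$, enters the count), and nothing in your outline supplies it. The robust fix is the paper's: do not compute the strata $\{h^0=k\}$ at all, but compute $\Poly(\M^{0+}(4m+2n-1))$ by its own wall-crossing from $\M^{\infty}(4m+2n-1)$, which is the universal curve of bidegree $(2,4)$; the single wall at $\alpha=1$ exchanges two $\PP^1$-bundles over the same base and contributes nothing. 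If you insist on your route, you must independently determine $h^0$ on every stratum of Theorem \ref{main_theorem} from the resolutions and then verify Poincar\'e duality of the outcome as a consistency check --- as it stands your formula fails that check.
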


\noindent
The proof of this theorem rests on the wall-crossing method of Choi and Chung \cite{choi_chung}.
In Section \ref{variation} we investigate how the moduli spaces $\M^{\alpha}(4m + 2n + 1)$ of $\alpha$-semi-stable pairs
with Hilbert polynomial $4m + 2n + 1$ change as the parameter $\alpha$ varies.
In Theorem \ref{wall_crossing} we find that $\M^{\alpha}(4m + 2n + 1)$ are related by two explicitly described flipping diagrams.
Combining this with Proposition \ref{blow_up}, yields a global description: $\MM$ is obtained from the flag Hilbert scheme
of three points on curves of degree $(2, 4)$ in $\PP^1 \times \PP^1$ by performing two flips followed by a blow-down
with center the Brill-Noether locus $\MM_2$.

The total space $X$ of $\omega_{\PP^1 \times \PP^1}$ is a Calabi-Yau threefold.
For a homology class $\beta = (r, s) \in \H_2(\PP^1 \times \PP^1) \subset \H_2(X)$ let
$N_{\beta}(X)$ be the genus zero Gromov-Witten invariant of $X$
and let $n_{\beta}(X)$ be the genus zero Gopakumar-Vafa invariant of $X$, as introduced in \cite{katz}.
It was noticed in \cite{choi_katz_klemm} that, up to sign, the latter is the Euler characteristic of a moduli space:
\[
n_{\beta}(X) = (-1)^{\dim \M(rm + sn + 1)} e(\M(rm + sn + 1)).
\]
In \cite{katz} Katz conjectured the relation
\[
N_{\beta}(X) = \sum_{k | \beta} \frac{n_{\beta/k}(X)}{k^3}.
\]
For $\beta = (4, 2)$, this conjecture reads
\begin{align*}
N_{(4, 2)}(X) = & (-1)^{\dim \M(4m + 2n + 1)} e(\M(4m + 2n + 1)) + \frac{1}{8}(-1)^{\dim \M(2m + n + 1)} e(\M(2m + n + 1)) \\
= & (-1)^{\dim \MM} e(\MM) + \frac{1}{8} (-1)^{\dim \PP^5} e(\PP^5) = (-1)^{17} 288 + \frac{1}{8} (-1)^5 6 = -288.75
\end{align*}

\section{Preliminaries}
\label{preliminaries}

Our main technical tool in Section \ref{classification} will be the Beilinson spectral sequence.
Let $\F$ be a coherent sheaf on $\PP^1 \times \PP^1$. According to \cite[Lemma 1]{buchdahl}, we have a spectral sequence
converging to $\F$, whose first level $E_1$ has display diagram
\begin{equation}
\label{E_1}
\xymatrix
{
\H^2(\F(-1, -1)) \tensor \O(-1, -1) = E_1^{-2, 2} \ar[r] & E_1^{-1, 2} \ar[r] & E_1^{0, 2} = \H^2(\F) \tensor \O \\
\H^1(\F(-1, -1)) \tensor \O(-1, -1) = E_1^{-2, 1} \ar[r]^-{\theta_1} & E_1^{-1, 1} \ar[r]^-{\theta_2} & E_1^{0, 1} = \H^1(\F) \tensor \O \\
\H^0(\F(-1, -1)) \tensor \O(-1, -1) = E_1^{-2, 0} \ar[r]^-{\theta_3} & E_1^{-1, 0} \ar[r]^-{\theta_4} & E_1^{0, 0} = \H^0(\F) \tensor \O
}
\end{equation}
where $E_1^{ij} = \{ 0 \}$ if $i \notin \{ -2, -1, 0 \}$ or if $j \notin \{ 0, 1, 2 \}$ and
\begin{equation}
\label{E_1^{-1,j}}
E_1^{-1, j} = \H^j(\F(0, -1)) \tensor \O(0, -1) \oplus \H^j(\F(-1, 0)) \tensor \O(-1, 0).
\end{equation}
If $\F$ has support of dimension $1$, then the first row of (\ref{E_1}) vanishes and
the convergence of the spectral sequence forces $\theta_2$ to be surjective and yields the exact sequence
\begin{equation}
\label{convergence}
0 \lra \Ker(\theta_1) \stackrel{\theta_5}{\lra} \Coker(\theta_4) \lra \F \lra \Ker(\theta_2)/\Image(\theta_1) \lra 0.
\end{equation}
An application of the Beilinson spectral sequence is the following lemma that will be used in Section \ref{classification}.

\begin{lemma}
\label{length_3_scheme}
Let $Z \subset \PP^1 \times \PP^1$ be a zero-dimensional subscheme of length $3$
that is not contained in a line of degree $(1, 0)$ or $(0, 1)$.
Then the ideal of $Z$ has resolution
\[
0 \lra 2\O(-2, -2) \stackrel{\zeta^\trans}{\lra} \O(-1, -2) \oplus \O(-2, -1) \oplus \O(-1, -1) \lra \I_Z \lra 0,
\]
where the maximal minors of $\zeta$ have no common factor.
The dual of the structure sheaf of $Z$ has resolution
\begin{equation}
\label{Z_resolution}
0 \lra \O(-2, -4) \lra \O(-1, -3) \oplus \O(0, -3) \oplus \O(-1, -2) \stackrel{\zeta}{\lra} 2\O(0, -2) \lra {\mathcal Ext}^2(\O_Z, \O) \lra 0.
\end{equation}
\end{lemma}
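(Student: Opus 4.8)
The plan is to read off a free resolution of $\I_Z$ from the Hilbert function of $Z$, and then obtain (\ref{Z_resolution}) by dualising. First, the hypothesis on $Z$ gives $\H^0(\I_Z(1, 0)) = \H^0(\I_Z(0, 1)) = 0$: a non-zero section of $\I_Z(1, 0)$, resp.\ of $\I_Z(0, 1)$, would be a line of bidegree $(1, 0)$, resp.\ $(0, 1)$, containing $Z$. Since $\H^0(\O(1, 1)) \simeq \CC^4$ and a length-$3$ subscheme imposes at most three linear conditions, $\H^0(\I_Z(1, 1)) \neq 0$; I would then show $\H^0(\I_Z(1, 1)) \simeq \CC$, using that a curve of bidegree $(1, 1)$ is either a smooth conic or the union of a line of bidegree $(1, 0)$ and a line of bidegree $(0, 1)$, that two distinct curves of bidegree $(1, 1)$ meet in a subscheme of length $2$, and that a pencil of such curves with a common line component would force $Z$ onto a line. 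So $Z$ lies on a unique curve $\Gamma$ of bidegree $(1, 1)$, and we have $0 \to \O(-1, -1) \to \I_Z \to \I_{Z/\Gamma} \to 0$.

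Next I would show $\H^0(\I_Z(2, 1)) \simeq \H^0(\I_Z(1, 2)) \simeq \CC^3$. The inequality $\dim \ge 3$ is again the condition count. For $\le 3$, twist the sequence above by $\O(2, 1)$: one gets $0 \to \O(1, 0) \to \I_Z(2, 1) \to \I_{Z/\Gamma}(2, 1) \to 0$, where $\H^0(\O(1, 0)) \simeq \CC^2$, $\H^1(\O(1, 0)) = 0$, and $\I_{Z/\Gamma}(2, 1)$ is the push-forward of a line bundle of total degree $0$ on $\Gamma$, which (using the hypothesis once more when $\Gamma$ is nodal) has at most one section. The case of $\I_Z(1, 2)$ is symmetric. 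These computations, together with the earlier vanishings, show that $\I_Z$ admits a system of generators in bidegrees $(1, 1)$, $(2, 1)$, $(1, 2)$, i.e.\ a resolution $0 \to \Ker(\zeta) \to \O(-1, -2) \oplus \O(-2, -1) \oplus \O(-1, -1) \to \I_Z \to 0$.

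The kernel $\Ker(\zeta)$ is locally free of rank $2$, with $P_{\Ker(\zeta)} = 2(m - 1)(n - 1)$ by additivity of Hilbert polynomials. To identify it with $2\O(-2, -2)$ I would restrict the resolution to a general ruling of each of the two types (each disjoint from $Z$), check that the restriction of $\Ker(\zeta)$ is $\O(-2) \oplus \O(-2)$, and conclude that $\Ker(\zeta)$ splits by the splitting criterion on $\PP^1 \times \PP^1$; the other numerically possible summand types $\O(-2, -2 + t) \oplus \O(-2, -2 - t)$ with $t \neq 0$ are ruled out this way, or directly from $\H^0(\I_Z(1, 0)) = \H^0(\I_Z(0, 1)) = 0$ together with the injectivity of $\zeta$. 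This yields the first resolution. Since $\I_Z$ has codimension $2$, the Hilbert--Burch theorem identifies $\I_Z$ with the ideal of maximal minors of $\zeta$ up to a principal factor, and that factor would define a divisor contained in $Z$, hence is a unit — so the maximal minors have no common factor.

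Finally, to get (\ref{Z_resolution}) I would apply $\mathcal{H}om(-, \O(-2, -4))$ to the resolution $0 \to 2\O(-2, -2) \xrightarrow{\zeta^{\trans}} \O(-1, -2) \oplus \O(-2, -1) \oplus \O(-1, -1) \to \I_Z \to 0$ just obtained. The two rightmost terms are locally free, so this produces $0 \to \mathcal{H}om(\I_Z, \O(-2, -4)) \to \O(-1, -2) \oplus \O(0, -3) \oplus \O(-1, -3) \xrightarrow{\zeta} 2\O(0, -2) \to \mathcal{E}xt^1(\I_Z, \O(-2, -4)) \to 0$. Here $\mathcal{H}om(\I_Z, \O(-2, -4)) \simeq \O(-2, -4)$ by reflexivity of a line bundle, and from $0 \to \I_Z \to \O \to \O_Z \to 0$ one gets $\mathcal{E}xt^1(\I_Z, \O(-2, -4)) \simeq \mathcal{E}xt^2(\O_Z, \O(-2, -4))$, which is the dualising sheaf of the length-$3$ scheme $Z$ up to a twist, hence isomorphic to $\O_Z$. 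Matching bidegrees of the summands gives (\ref{Z_resolution}). The main obstacle is the middle part of the argument: pinning down the Hilbert function of $Z$ precisely and ruling out the smaller and differently-graded resolutions permitted by the Euler-characteristic bookkeeping — this is exactly where the assumption that $Z$ lies on no line of bidegree $(1, 0)$ or $(0, 1)$ is essential.
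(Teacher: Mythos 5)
Your route (Hilbert function of $Z$, then Hilbert--Burch, then dualisation) is genuinely different from the paper's, which applies the Beilinson-type spectral sequence of Buchdahl to $\I_Z(1,1)$ and reads off the generators \emph{and} the kernel $2\O(-1,-1)$ in one stroke; that is precisely the advantage of the paper's method, because the two places where your argument is thin are exactly the ones the spectral sequence settles automatically. The first gap: from $h^0(\I_Z(1,1))=1$ and $h^0(\I_Z(2,1))=h^0(\I_Z(1,2))=3$ you assert that $\I_Z$ ``admits a system of generators in bidegrees $(1,1),(2,1),(1,2)$''. Dimension counts of spaces of sections do not give sheaf-level generation: you must choose the three sections and prove the evaluation map $\O(-1,-2)\oplus\O(-2,-1)\oplus\O(-1,-1)\to\I_Z$ is surjective, e.g.\ via $0\to\O(-1,-1)\to\I_Z\to\I_{Z/\Gamma}\to 0$ and an analysis of $\I_{Z/\Gamma}$ on $\Gamma$, including the nodal and non-reduced cases. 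This is fillable but is real work, and the choice matters for the next step.

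The second gap is more serious: the identification $\Ker(\zeta)\simeq 2\O(-2,-2)$ cannot be reached as you describe, because the kernel depends on which sections you chose. Concretely, let $Z$ consist of three distinct points on a smooth $(1,1)$-curve $\Gamma$ (such $Z$ lies on no ruling line). Then $Z=\Gamma\cap D$ for a $(2,1)$-curve $D$, so $\I_Z$ is already generated by the $(1,1)$-form $h$ and the $(2,1)$-form $g$; taking the $(1,2)$-generator to be $ah$ with $0\neq a\in\H^0(\O(0,1))$ still gives a surjection, but its kernel is $\O(-1,-2)\oplus\O(-3,-2)$, not $2\O(-2,-2)$ --- even though $\H^0(\I_Z(1,0))=\H^0(\I_Z(0,1))=0$ and the syzygy map is injective. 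So those vanishings plus injectivity cannot ``directly'' rule out unbalanced summands; you must first fix a good choice (for instance $(2,1)$- and $(1,2)$-sections outside $\H^0(\O(1,0))h$ and $\H^0(\O(0,1))h$) and then use it. Moreover ``the splitting criterion on $\PP^1\times\PP^1$'' does not exist in the form you need: rank-$2$ bundles on the quadric need not split, and balancedness of the splitting type on a \emph{general} fiber of each ruling does not imply splitting --- you need it on every fiber of one ruling plus a base-change argument, or a cohomological characterisation of $2\O(-2,-2)$, which essentially reproduces the paper's spectral-sequence computation. (A smaller caveat, shared with the paper's own dualisation step: identifying $\mathcal{E}xt^1(\I_Z,\O(-2,-4))$ with $\O_Z$ uses that $Z$ is Gorenstein, since this sheaf is the dualising module of $Z$ up to twist.) Your first step ($h^0(\I_Z(1,1))=1$ unless $Z$ lies on a ruling line) and the Hilbert--Burch argument for the minors having no common factor are fine.
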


\begin{proof}
We apply the spectral sequence (\ref{E_1}) to the sheaf $\F = \I_Z(1, 1)$.
By hypothesis, $\H^0(\I_Z(1, 0)) = \{ 0 \}$ and $\H^0(\I_Z(0, 1)) = \{ 0 \}$ hence, from (\ref{E_1^{-1,j}}), we obtain the vanishing of $E_1^{-1, 0}$.
Since $\H^0(\I_Z) = \{ 0 \}$, also $E_1^{-2, 0}$ vanishes.
From the short exact sequence
\[
0 \lra \I_Z \lra \O \lra \O_Z \lra 0
\]
we obtain the vanishing of $\H^2(\I_Z)$. Analogously, $\H^2(\I_Z(1, 0))$, $\H^2(\I_Z(0, 1))$ and $\H^2(\I_Z(1, 1))$ vanish.
The first row of (\ref{E_1}) vanishes.
Denote $d = \dim_{\CC}^{} \H^1(\I_Z(1, 1))$. Display diagram (\ref{E_1}) now takes the simplified form
\[
\xymatrix
{
0 \ar[r] & 0 \ar[r] & 0 \\
2\O(-1, -1) \ar[r]^-{\theta_1} & \O(0, -1) \oplus \O(-1, 0) \ar[r]^-{\theta_2} & d\O \\
0 \ar[r] & 0 \ar[r] & (d + 1)\O
}
\]
From the convergence of the spectral sequence we see that $\theta_2$ is surjective.
There is no surjective morphism $\theta_2 \colon \O(0, -1) \oplus \O(-1, 0) \to d\O$ for $d \ge 1$, hence $d = 0$.
Thus, $\Ker(\theta_1)$ is a subsheaf of $\O$. We claim that $\Ker(\theta_1) = \{ 0 \}$.
Indeed, if $\Ker(\theta_1)$ were non-zero, then $\O/\Ker(\theta_1)$ would be a torsion subsheaf of $\I_Z(1, 1)$.
Combining the exact sequences
\[
0 \lra \O \lra \I_Z(1, 1) \lra \Coker(\theta_1) \lra 0,
\]
\[
0 \lra 2\O(-1, -1) \lra \O(0, -1) \oplus \O(-1, 0) \lra \Coker(\theta_1) \lra 0
\]
yields the resolution
\[
0 \lra 2\O(-1, -1) \lra \O(0, -1) \oplus \O(-1, 0) \oplus \O \lra \I_Z(1, 1) \lra 0.
\]
Applying ${\mathcal Hom}(-, \O(-1, -3))$, we obtain resolution (\ref{Z_resolution}).
If the maximal minors of the matrix representing $\zeta$ had a common factor $f$,
then the reduced support of $\Coker(\zeta)$ would contain the curve $\{ f = 0 \}$.
But this is impossible because ${\mathcal Ext}^2(\O_Z, \O)$ has support of dimension zero.
\end{proof}

\begin{lemma}
\label{unique_extension}
Let $S$ be a smooth projective surface and let $C \subset S$ be a locally Cohen-Macaulay curve.
Let ${\mathcal Z}$ be a coherent sheaf on $S$ with support of dimension zero.
Let $\F$ be an extension of ${\mathcal Z}$ by $\O_C$ without zero-dimensional torsion.
Then $\F$ is uniquely determined up to isomorphism,
meaning that if $\F'$ is another extension of ${\mathcal Z}$ by $\O_C$ without zero-dimensional torsion, then $\F' \simeq \F$.
Moreover, ${\mathcal Z} \simeq {\mathcal Ext}^2_{\O_S}(\O_Z, \O_S)$ for a subscheme $Z \subset C$ of dimension zero,
so we have the exact sequence
\begin{equation}
\label{C_F_Z_dual}
0 \lra \O_C \lra \F \lra {\mathcal Ext}^2_{\O_S}(\O_Z, \O_S) \lra 0.
\end{equation}
\end{lemma}

\begin{proof}
This lemma is a direct consequence of \cite[Proposition B.5]{pandharipande_thomas}.
Indeed, given an exact sequence
\begin{equation}
\label{C_F_C}
0 \lra \O_C \lra \F \lra {\mathcal Z} \lra 0
\end{equation}
in which $\F$ has no zero-dimensional torsion, then the pair $(\O_C, \F)$ is a stable pair supported on $C$, in the sense of \cite{pandharipande_thomas}.
By \cite[Lemma B.2]{pandharipande_thomas}, we have ${\mathcal Ext}_{\O_C}^1(\F, \O_C) = \{ 0 \}$.
Applying ${\mathcal Hom}_{\O_C}^{}(-, \O_C)$ to (\ref{C_F_C}), yields the exact sequence
\begin{equation}
\label{C_F_C_dual}
0 \lra {\mathcal Hom}^{}_{\O_C} (\F, \O_C) \lra \O_C \lra {\mathcal Ext}^1_{\O_C}({\mathcal Z}, \O_C) \lra 0.
\end{equation}
Thus, ${\mathcal Ext}^1_{\O_C}({\mathcal Z}, \O_C)$ is the structure sheaf $\O_Z$ of a zero-dimensional subscheme $Z \subset C$.
Under the bijection of \cite[Proposition B.5]{pandharipande_thomas} between stable pairs supported on $C$
and zero-dimensional subschemes of $C$, the pair $(\O_C, \F)$ corresponds to $Z$,
so it is uniquely determined, up to isomorphism.
Tensoring (\ref{C_F_C_dual}) with the dualising line bundle $\omega_C$ on $C$, yields the exact sequence
\begin{equation}
\label{F_dual_C_Z}
0 \lra {\mathcal Hom}(\F, \omega_C) \lra \omega_C \lra \O_Z \lra 0.
\end{equation}
We claim that ${\mathcal Hom}(\F, \omega_C) \simeq {\mathcal Ext}^1(\F, \omega_S)$.
This follows by applying ${\mathcal Hom}(\F, -)$ to the exact sequence
\[
0 \lra \omega_S \lra \omega_S \tensor \O(C) \lra \omega_S \tensor \O(C)|_{C} \simeq \omega_C \lra 0.
\]
We obtain the exact sequence
\[
0 \lra {\mathcal Hom}(\F, \omega_C) \lra {\mathcal Ext}^1(\F, \omega_S) \lra {\mathcal Ext}^1(\F, \omega_S \tensor \O(C)).
\]
The last morphism is locally multiplication with an equation $f$ defining $C$.
But $C = \operatorname{supp}(\F)$, hence $f$ annihilates $\F$,
and hence $f$ annihilates ${\mathcal Ext}^1(\F, \omega_S)$. This proves the claim.
According to \cite[Remark 4]{rendiconti}, ${\mathcal Ext}^1({\mathcal Ext^1}(\F, \omega_S), \omega_S) \simeq \F$.
Clearly,
\[
{\mathcal Ext}^1(\O_Z, \omega_S) = \{ 0 \}, \qquad {\mathcal Ext}^1(\omega_C, \omega_S) \simeq \O_C, \qquad {\mathcal Ext}^2(\omega_C, \omega_S) = \{ 0 \}.
\]
Applying ${\mathcal Hom}(-, \omega_S)$ to (\ref{F_dual_C_Z}) yields extension (\ref{C_F_Z_dual}).
Comparing with (\ref{C_F_C}), we see that ${\mathcal Z} \simeq {\mathcal Ext}^2(\O_Z, \O_S)$.
\end{proof}

\noindent
Crucial for our classification of semi-stable sheaves
is the following vanishing result that should be compared with \cite[Proposition 4]{genus_two}.
We fix vector spaces $V_1$ and $V_2$ over $\CC$ of dimension $2$ and we identify $\PP^1 \times \PP^1$
with $\PP(V_1) \times \PP(V_2)$. Let $\{ x, y \}$ be a basis of $V_1^*$ and let $\{z, w \}$ be a basis of $V_2^*$.
A morphism $\O(i, j) \to \O(k, l)$ will be represented by a form in $\Sym^{k - i} V_1^* \tensor \Sym^{l - j} V_2^*$.

\begin{proposition}
\label{vanishing}
Assume that the sheaf $\F$ gives a point in $\MM$.
\begin{enumerate}
\item[(i)] We have $\H^0(\F(-1, -1)) = \{ 0 \}$ and $\H^0(\F(-1, 0)) = \{ 0 \}$.
\item[(ii)] If $\F$ satisfies the vanishing condition $\H^0(\F(0, -1)) = \{ 0 \}$, then $\H^1(\F) = \{ 0 \}$.
\end{enumerate}
\end{proposition}

\begin{proof}
(i) The vanishing of $\H^0(\F(-1, -1))$ follows from \cite[Proposition 2(i)]{genus_two}.
To prove the vanishing of $\H^0(\F(-1, 0))$ we can argue as in the proof of \cite[Proposition 3]{genus_two}.

\medskip

\noindent
(ii) Assume now that $\H^0(\F(0, -1)) = \{ 0 \}$. 
From (\ref{E_1^{-1,j}}) and part (i) of the proposition, we deduce that $E_1^{-1, 1} \simeq \O(0, -1) \oplus 3\O(-1, 0)$.
Denote $d = \dim_{\CC}^{} \H^1(\F)$.
There is no surjective morphism
\[
\theta_2 \colon \O(0, -1) \oplus 3\O(-1, 0) \lra d\O
\]
for $d \ge 4$, hence $d \le 3$. Assume that $d = 3$. The maximal minors for a matrix representation of $\theta_2$ have no common factor,
otherwise $\theta_2$ would not be surjective.
Thus, $\Ker(\theta_2) \simeq \O(-3, -1)$, hence $\theta_1 = 0$, and hence,
from the exact sequence (\ref{convergence}), we obtain a surjective morphism
$\F \to \O(-3, -1)$. This is absurd. Thus, the case when $d = 3$ is unfeasible.

Consider now the case when $d = 2$. If $\theta_2$ is represented by a matrix of the form
\[
A = \left[
\begin{array}{cccc}
0 & \star & \star & \star \\
0 & \star & \star & \star
\end{array}
\right],
\]
then $\Ker(\theta_2) \simeq \O(0, -1) \oplus \O(-3, 0)$, hence $\O(-3, 0)$ is a direct summand of $\Ker(\theta_2)/\Image(\theta_1)$,
and hence, from the exact sequence (\ref{convergence}), we obtain a surjective morphism $\F \to \O(-3, 0)$. This is absurd.
If $\theta_2$ is represented by a matrix of the form
\[
B = \left[
\begin{array}{cccc}
\star & \star & \star & 0 \\
\star & \star & \star & 0
\end{array}
\right],
\]
then $\Ker(\theta_2) \simeq \O(-2, -1) \oplus \O(-1, 0)$, hence $\O(-2, -1)$ is a direct summand of $\Ker(\theta_2)/\Image(\theta_1)$,
and hence we obtain a surjective morphism $\F \to \O(-2, -1)$. This is absurd.
If $\theta_2$ is represented by a matrix of the form
\[
C = \left[
\begin{array}{cccc}
1 \tensor u & v \tensor 1 & 0 & 0 \\
0 & 0 & x \tensor 1 & y \tensor 1
\end{array}
\right],
\]
then $\Ker(\theta_2) \simeq \O(-1, -1) \oplus \O(-2, 0)$ and we obtain a surjective morphism $\F \to \O(-2, 0)$. This is absurd.
We claim that, if $\theta_2$ is not of the form $A$, $B$ or $C$, then $\theta_2$ is represented by a matrix of the form
\[
D = \left[
\begin{array}{cccc}
- 1 \tensor z & x \tensor 1 & y \tensor 1 & 0 \\
\star & \star & \star & v \tensor 1
\end{array}
\right],
\]
with $v \neq 0$. Indeed, since $\theta_2 \nsim A$ and $\theta_2 \nsim B$, we may write
\[
\theta_2 = \left[
\begin{array}{cccc}
1 \tensor u & v_1 \tensor 1 & v_2 \tensor 1 & 0 \\
\star & \star & \star & v \tensor 1
\end{array}
\right]
\]
with $u \neq 0$, $v \neq 0$. Since $\theta_2 \nsim B$, $v_1$ and $v_2$ cannot be both zero.
If $v_1$ and $v_2$ are linearly independent, then $\theta_2 \sim D$.
If $v_1$ and $v_2$ span a one-dimensional vector space,
then, since $\theta_2 \nsim B$, we may write
\[
\theta_2 = \left[
\begin{array}{cccc}
1 \tensor u\phantom{_1} & v_1 \tensor 1 & 0 & 0 \\
1 \tensor u_1 & 0 & x \tensor 1 & y \tensor 1
\end{array}
\right].
\]
Since $\theta_2 \nsim C$, we have $u_1 \neq 0$, forcing $\theta_2 \sim D$.
In the case when $\theta_2 = D$, it is easy to see that the morphism
\[
\theta_1 \colon 5\O(-1, -1) \lra \O(0, -1) \oplus 3\O(-1, 0)
\]
is represented by a matrix of the form
\[
\left[
\begin{array}{ccccc}
x \tensor 1 & y \tensor 1 & 0 & 0 & 0 \\
1 \tensor z & 0 & 0 & 0 & 0 \\
0 & 1 \tensor z & 0 & 0 & 0 \\
\star & \star & 0 & 0 & 0
\end{array}
\right],
\]
hence $\Ker(\theta_1) \simeq 3\O(-1, -1)$, and hence $\Coker(\theta_5)$ has Hilbert polynomial $3m + 3n + 3$.
But then, in view of the exact sequence (\ref{convergence}),
$\Coker(\theta_5)$ is a destabilizing subsheaf of $\F$. Thus, the case when $d = 2$ is also unfeasible.

It remains to examine the case when $d = 1$. Recall that $\theta_2$ is surjective, hence it can have two possible forms.
Firstly, if
\[
\theta_2 = \left[
\begin{array}{cccc}
0 & x \tensor 1 & y \tensor 1 & 0
\end{array}
\right],
\]
then $\Ker(\theta_2) \simeq \O(0, -1) \oplus \O(-2, 0) \oplus \O(-1, 0)$ and we obtain a surjective morphism
$\F \to \O(-2, 0)$, which is absurd. The second form is
\[
\theta_2 = \left[
\begin{array}{cccc}
- 1 \tensor z & x \tensor 1 & y \tensor 1 & 0
\end{array}
\right].
\]
If $\theta_1$ is represented by a matrix having two zero columns, then $\Ker(\theta_1) \simeq 2\O(-1, -1)$, hence $\Coker(\theta_5)$
has Hilbert polynomial $2m + 2n + 2$, and hence $\Coker(\theta_5)$ is a destabilizing subsheaf of $\F$.
Thus, we may write
\[
\theta_1 = \left[
\begin{array}{ccccc}
x \tensor 1 & y \tensor 1 & 0 & 0 & 0 \\
1 \tensor z & 0 & 0 & 0 & 0 \\
0 & 1 \tensor z & 0 & 0 & 0 \\
0 & 0 & 1 \tensor z & 1 \tensor w & 0
\end{array}
\right],
\]
hence $\Ker(\theta_1) \simeq \O(-1, -2) \oplus \O(-1, -1)$, and hence $\Coker(\theta_5)$ has Hilbert polynomial $3m + 2n + 2$.
But then $\Coker(\theta_5)$ is a destabilizing subsheaf of $\F$.
We deduce that the case when $d = 1$ is also unfeasible.
\end{proof}


\section{Classification of sheaves}
\label{classification}

We begin our classification of semi-stable sheaves by examining the Brill-Noether locus of sheaves that do not satisfy
the first vanishing condition in Proposition \ref{vanishing}(ii).

\begin{proposition}
\label{M_2}
The sheaves $\F$ in $\MM$ satisfying the condition $\H^0(\F(0, -1)) \neq \{ 0 \}$ are precisely the non-split extension sheaves of the form
\begin{equation}
\label{C_F_p}
0 \lra \O_C(0, 1) \lra \F \lra \CC_p \lra 0,
\end{equation}
where $C \subset \PP^1 \times \PP^1$ is a curve of degree $(2, 4)$ and $p$ is a point on $C$.
Moreover, the sheaves from (\ref{C_F_p}) are precisely the sheaves $\F$ having a resolution of the form
\begin{equation}
\label{M_2_resolution}
0 \lra \O(-2, -2) \oplus \O(-1, -3) \stackrel{\f}{\lra} \O(-1, -2) \oplus \O(0, 1) \lra \F \lra 0,
\end{equation}
with $\f_{11} \neq 0$, $\f_{12} \neq 0$.
Let $\MM_2 \subset \MM$ be the subset of sheaves $\F$ from (\ref{C_F_p}).
Then $\MM_2$ is closed, irreducible, of codimension $2$, and is isomorphic to the universal curve of degree $(2, 4)$
in $\PP^1 \times \PP^1$.
Thus, $\MM_2$ is a fiber bundle with fiber $\PP^{13}$ and base $\PP^1 \times \PP^1$.
\end{proposition}

\begin{proof}
Let $\F$ give a point in $\MM$ and satisfy $\H^0(\F(0, -1)) \neq \{ 0 \}$. As in the proof of \cite[Proposition 2]{genus_two},
there is an injective morphism $\O_C \to \F(0, -1)$ for a curve $C$ of degree $(s, r)$, $0 \le s \le 2$, $0 \le r \le 4$, $1 \le r + s \le 6$.
From the stability of $\F$ we have the inequality
\[
\p(\O_C(0, 1)) = \frac{r + 2s - rs}{r + s} \le \frac{1}{6} = \p(\F),
\]
which has the unique solution $(s, r) = (2, 4)$.
We obtain extension (\ref{C_F_p}).
Conversely, let $\F$ be given by the non-split extension (\ref{C_F_p}).
As in the proof of \cite[Proposition 3]{genus_two}, we can show that $\O_C(0, 1)$ is stable,
from which it immediately follows that $\F$ gives a point in $\MM$ and that $\H^0(\F(0, -1)) \neq \{ 0 \}$.
Choose $\f_{11} \in V_1^* \tensor \CC$ and $\f_{12} \in \CC \tensor V_2^*$ defining $p$.
Since $p \in C$, we can find $\f_{21} \in \Sym^2 V_1^* \tensor \Sym^3 V_2^*$ and $\f_{22} \in V_1^* \tensor \Sym^4 V_2^*$
such that the polynomial $\f_{11} \f_{22} - \f_{12} \f_{21}$ defines $C$.
Consider the morphism
\[
\f \colon \O(-2, -2) \oplus \O(-1, -3) \lra \O(-1, -2) \oplus \O(0, 1),
\]
\[
\f = \left[
\begin{array}{cc}
\f_{11} & \f_{12} \\
\f_{21} & \f_{22}
\end{array}
\right].
\]
From the snake lemma we see that $\Coker(\f)$ is an extension of $\CC_p$ by $\O_C(0, 1)$.
Since $\Coker(\f)$ has no zero-dimensional torsion, we can apply Lemma \ref{unique_extension} to deduce that $\F \simeq \Coker(\f)$.
Thus, $[\F] \in \MM_2$ if and only if $\F$ has resolution (\ref{M_2_resolution}).
\end{proof}

In the remaining part of this section we will assume that $\F$ satisfies both vanishing conditions from Proposition \ref{vanishing}(ii).
The exact sequence (\ref{convergence}) takes the form
\begin{equation}
\label{generic_convergence}
0 \lra \Ker(\theta_1) \stackrel{\theta_5}{\lra} \O \lra \F \lra \Coker(\theta_1) \lra 0,
\end{equation}
where
\[
\theta_1 \colon 5\O(-1, -1) \lra \O(0, -1) \oplus 3\O(-1, 0).
\]

\begin{proposition}
\label{M_3_4}
Assume that $[\F] \in \MM$ and that $\H^0(\F(0, -1)) = \{ 0 \}$.
Assume that the maximal minors of $\theta_1$ have a common factor. Then $\F$ is an extension of the form
\begin{equation}
\label{Q_F_L}
0 \lra \O_Q \lra \F \lra \O_L(1, 0) \lra 0
\end{equation}
for a quintic curve $Q \subset \PP^1 \times \PP^1$ of degree $(2, 3)$ and a line $L \subset \PP^1 \times \PP^1$ of degree $(0, 1)$,
or is an extension of the form
\begin{equation}
\label{Q_p_F_L}
0 \lra \O_Q(p) \lra \F \lra \O_L \lra 0,
\end{equation}
where $\O_Q(p)$ is a non-split extension of $\CC_p$ by $\O_Q$ for a point $p \in Q$.

Conversely, any extension $\F$ as in (\ref{Q_F_L}) or (\ref{Q_p_F_L}) satisfying the condition $\H^0(\F) \simeq \CC$ is semi-stable.
Let $\MM_4 \subset \MM$ be the subset of sheaves $\F$ as in (\ref{Q_F_L}) satisfying the condition $\H^0(\F) \simeq \CC$.
Let $\MM_3 \subset \MM$ be the subset of sheaves $\F$ as in (\ref{Q_p_F_L}) satisfying the condition $\H^0(\F) \simeq \CC$.
Then $\MM_3$ and $\MM_4$ are locally closed, irreducible subsets, of codimension $3$, respectively, $4$.
\end{proposition}

\begin{proof}
Let $\eta_i$ be the maximal minor of a matrix representing $\theta_1$ obtained by deleting column $i$.
Denote $g = \gcd(\eta_1, \ldots, \eta_5)$. Let $(s, r) = (2, 4) - \deg(g)$.
It is easy to check that the sequence
\[
0 \lra \O(-s, -r) \stackrel{\eta}{\lra} 5\O(-1, -1) \stackrel{\theta_1}{\lra} \O(0, -1) \oplus 3\O(-1, 0),
\]
\[
\eta = \left[
\begin{array}{rrrrr}
\frac{\eta_1}{g} & - \frac{\eta_2}{g} & \frac{\eta_3}{g} & - \frac{\eta_4}{g} & \frac{\eta_5}{g}
\end{array}
\right]^\trans
\]
is exact.
From (\ref{generic_convergence}) we see that $\Coker(\theta_5)$ is a subsheaf of $\F$,
hence we have the inequality
\[
1 - \frac{rs}{r+s} = \p(\Coker(\theta_5)) \le \p(\F) = \frac{1}{6},
\]
forcing $(s, r) = (2, 3)$ or $(s, r) = (2, 2)$.
If $(s, r) = (2, 2)$, then $P_{\Coker(\theta_1)} = 2m + 1$ and $\Coker(\theta_1)$ is semi-stable, which follows from the semi-stability of $\F$.
But, according to \cite[Proposition 10]{ballico_huh}, $\M(2m + 1) = \emptyset$.
This contradiction shows that $(s, r) \neq (2, 2)$, hence $(s, r) = (2, 3)$.
From (\ref{generic_convergence}) we obtain the extension
\[
0 \lra \O_Q \lra \F \lra \Coker(\theta_1) \lra 0.
\]
If $\Coker(\theta_1)$ has no zero-dimensional torsion, we obtain extension (\ref{Q_F_L}).
Otherwise, the zero-dimensional torsion has length $1$, its pull-back in $\F$ is a semi-stable sheaf $\O_Q(p)$,
and we obtain extension (\ref{Q_p_F_L}).

Conversely, let $\F$ be an extension as in (\ref{Q_F_L}) satisfying $\H^0(\F) \simeq \CC$.
Assume that $\F$ had a destabilizing subsheaf $\F'$. Let $\G$ be the image of $\F'$ in $\O_L(1, 0)$.
According to \cite[Proposition 1]{genus_two}, $\O_Q$ is stable, hence $\chi(\F' \cap \O_Q) \le -1$.
Since $\chi(\F') \ge 1$, we see that $\chi(\G) \ge 2$, hence $\G = \O_L(1, 0)$ and $\O_Q \nsubseteq \F'$.
Thus $\H^0(\F' \cap \O_Q) = \{ 0 \}$, hence the map $\H^0(\F') \to \H^0(\O_L(1, 0))$ is injective.
But this map factors through $\H^0(\F) \to \H^0(\O_L(1, 0))$, which, by hypothesis, is the zero map.
We deduce that $\H^0(\F') = \{ 0 \}$, which yields a contradiction. Thus, there is no destabilizing subsheaf.
The same argument applies for extensions (\ref{Q_p_F_L}) satisfying $\H^0(\F) \simeq \CC$.

By Serre duality
\[
\Ext^1(\O_L(1, 0), \O_Q) \simeq \Ext^1(\O_Q, \O_L(-1, -2))^*.
\]
From the short exact sequence
\[
0 \lra \O(-2, -3) \lra \O \lra \O_Q \lra 0
\]
we obtain the long exact sequence
\[
\{ 0 \} = \H^0(\O_L(-1, -2)) \lra \H^0(\O_L(1, 1)) \simeq \CC^2 \lra \Ext^1(\O_Q, \O_L(-1, -2)) \lra \H^1(\O_L(-1, -2)) = \{ 0 \}.
\]
Thus $\Ext^1(\O_L(1, 0), \O_Q) \simeq \CC^2$, hence $\MM_4$ is isomorphic to an open subset of a $\PP^1$-bundle over $\PP^{11} \times \PP^1$.
By Serre duality we have
\[
\Ext^1(\O_L, \O_Q(p)) \simeq \Ext^1(\O_Q(p), \O_L(-2, -2))^*.
\]
Using Lemma \ref{unique_extension}, it is easy to see that the sheaves $\O_Q(p)$
are precisely the sheaves having a resolution of the form
\begin{equation}
\label{Q_p_resolution}
0 \lra \O(-2, -2) \oplus \O(-1, -3) \stackrel{\f}{\lra} \O(-1, -2) \oplus \O \lra \O_Q(p) \lra 0,
\end{equation}
where $\f_{11} \neq 0$, $\f_{12} \neq 0$ (cf. Proposition \ref{M_2}).
From resolution (\ref{Q_p_resolution}) we obtain the long exact sequence
\begin{align*}
\{ 0 \} = & \H^0(\O_L(-1, 0) \oplus \O_L(-2, -2)) \lra \H^0(\O_L \oplus \O_L(-1, 1)) \simeq \CC \lra \\
& \Ext^1(\O_Q(p), \O_L(-2, -2)) \lra \\
& \H^1(\O_L(-1, 0) \oplus \O_L(-2, -2)) \simeq \CC \lra \H^1(\O_L \oplus \O_L(-1, 1)) = \{ 0 \}.
\end{align*}
Thus, $\Ext^1(\O_L, \O_Q(p)) \simeq \CC^2$, hence $\MM_3$ has dimension $14$. The other claims about $\MM_3$ are obvious.
\end{proof}

\begin{lemma}
\label{generic_sheaves}
Assume that $[\F] \in \MM$ and $\H^0(\F(0, -1)) = \{ 0 \}$.
Assume that the maximal minors of $\theta_1$ have no common factor.
Then $\Ker(\theta_1) \simeq \O(-2, -4)$ and $\Coker(\theta_1) \simeq {\mathcal Ext}^2(\O_Z, \O)$ with $Z$ described below.
We have an extension
\begin{equation}
\label{generic_extension}
0 \lra \O_C \lra \F \lra {\mathcal Ext}^2(\O_Z, \O) \lra 0,
\end{equation}
where $C$ is a curve of degree $(2, 4)$ and $Z \subset C$ is a subscheme of dimension zero and length $3$.
Moreover, $Z$ is not contained in a line of degree $(0, 1)$.
\end{lemma}

\begin{proof}
The fact that $\Ker(\theta_1) \simeq \O(-2, -4)$ is well-known.
The Hilbert polynomial of $\Coker(\theta_1)$ is $3$, hence $\Coker(\theta_1)$ has dimension zero and length $3$.
From (\ref{generic_convergence}), we obtain the exact sequence
\[
0 \lra \O_C \lra \F \lra {\mathcal Coker}(\theta_1) \lra 0.
\]
We can now apply Lemma \ref{unique_extension} to obtain the extension (\ref{generic_extension}) and the isomorphism
$\Coker(\theta_1) \simeq {\mathcal Ext}^2(\O_Z, \O)$.

Assume that $Z$ is contained in a line $L$ of degree $(0, 1)$. Then $\O_Z \simeq {\mathcal Ext}^2(\O_Z, \O)$.
Choose $\f_{11} \in \CC \tensor V_2^*$ defining $L$.
Choose $\f_{12} \in \Sym^3 V_1^* \tensor \CC$ such that $\f_{11}$ and $\f_{12}$ define $Z$.
If $L \nsubseteq C$, then $L.C = 2$, which contradicts the fact that $Z \subset L \cap C$.
Thus $L \subset C$, so there is $\f_{22} \in \Sym^2 V_1^* \tensor \Sym^3 V_2^*$ such that $\f_{11} \f_{22}$ is a defining polynomial of $C$.
Consider the exact sequence
\[
0 \lra \O(1, -4) \oplus \O(-2, -3) \stackrel{\f}{\lra} \O(1, -3) \oplus \O \lra \F' \lra 0,
\]
\[
\f = \left[
\begin{array}{cc}
\f_{11} & \f_{12} \\
0 & \f_{22}
\end{array}
\right].
\]
Then $\F'$ is an extension of $\O_Z$ by $\O_C$ without zero-dimensional torsion.
Since, from the exact sequence (\ref{generic_extension}),
$\F$ is also an extension of $\O_Z$ by $\O_C$ without zero-dimensional torsion,
we can apply Lemma \ref{unique_extension} to deduce that $\F \simeq \F'$.
We obtain a contradiction from the isomorphisms $\CC\simeq \H^0(\F) \simeq \H^0(\F') \simeq \CC^3$.
\end{proof}

\begin{proposition}
\label{M_0}
Let $\MM_0 \subset \MM$ be the subset of sheaves $\F$ for which $\H^0(\F(0, -1))$ $= \{ 0 \}$, $\Ker(\theta_1) \simeq \O(-2, -4)$
and $\operatorname{supp}(\Coker(\theta_1))$ is not contained in a line of degree $(1, 0)$ or $(0, 1)$.
Then $\MM_0$ is open and can be described as the subset of sheaves $\F$ having a resolution of the form
\begin{equation}
\label{M_0_resolution}
0 \lra \O(-1, -3) \oplus \O(0, -3) \oplus \O(-1, -2) \stackrel{\f}{\lra} \O(0, -2) \oplus \O(0, -2) \oplus \O \lra \F \lra 0,
\end{equation}
where $\f_{12}$ and $\f_{22}$ are linearly independent and the maximal minors of the matrix $(\f_{ij})_{i = 1, 2, j = 1, 2, 3}$
have no common factor.
\end{proposition}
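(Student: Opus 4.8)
The plan is to read off the resolution (\ref{M_0_resolution}) from the extension (\ref{generic_extension}) supplied by Lemma \ref{generic_sheaves} and the resolution (\ref{Z_resolution}) of a length-$3$ scheme supplied by Lemma \ref{length_3_scheme}, and then to match the two lists of conditions against each other. So, first I would take $\F$ giving a point of $\MM_0$. By the definition of $\MM_0$ the support of $\Coker(\f_1)$ is contained in no line of bidegree $(1, 0)$, and by Lemma \ref{generic_sheaves} it is contained in no line of bidegree $(0, 1)$ either; moreover $\Ker(\f_1) \simeq \O(-2, -4)$ and $\Coker(\f_1) \simeq \O_Z$ for a zero-dimensional subscheme $Z$ of length $3$, so that (\ref{generic_convergence}) becomes the extension (\ref{generic_extension})
\[
0 \lra \O_C \lra \F \lra \O_Z \lra 0,
\]
with $C$ a curve of bidegree $(2, 4)$ containing $Z$. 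Since $Z$ lies on no line of bidegree $(1, 0)$ or $(0, 1)$, Lemma \ref{length_3_scheme} applies and yields the resolution (\ref{Z_resolution}) of $\O_Z$, whose middle map $\zeta$ has maximal minors with no common factor.

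Next I would feed (\ref{Z_resolution}), together with the standard resolution $0 \to \O(-2, -4) \to \O \to \O_C \to 0$, into the horseshoe lemma applied to (\ref{generic_extension}). The outcome is a (generally non-minimal) resolution
\[
0 \lra \O(-2, -4) \stackrel{u}{\lra} \O(-2, -4) \oplus \O(-1, -3) \oplus \O(0, -3) \oplus \O(-1, -2) \lra \O \oplus 2\O(0, -2) \lra \F \lra 0,
\]
in which the component of $u$ landing in the first summand $\O(-2, -4)$ is multiplication by a scalar. Arguing as in the proof of \cite[Proposition 2.3.2]{illinois} --- exactly as for resolution (\ref{M_2_resolution}) in Proposition \ref{M_2} --- this scalar is non-zero, hence invertible, and cancelling it produces resolution (\ref{M_0_resolution}). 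By construction the submatrix of $\f$ formed by the first two rows is the map $\zeta$ of (\ref{Z_resolution}), so its maximal minors have no common factor. To obtain the remaining condition I would twist (\ref{M_0_resolution}) by $\O(0, -1)$ and take the long exact cohomology sequence: the groups $\H^0$ of $\O(-1, -4) \oplus \O(0, -4) \oplus \O(-1, -3)$ and of $\O(0, -3) \oplus \O(0, -3) \oplus \O(0, -1)$ vanish, so
\[
\H^0(\F(0, -1)) \simeq \Ker\bigl( \H^1(\O(0, -4)) \lra \H^1(\O(0, -3)) \oplus \H^1(\O(0, -3)) \bigr),
\]
the map being induced by $\f_{12}$ and $\f_{22}$; a short computation over $\PP^1$ shows that this kernel is zero precisely when $\f_{12}$ and $\f_{22}$ are linearly independent. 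As $\H^0(\F(0, -1)) = 0$ by hypothesis, $\f_{12}$ and $\f_{22}$ are linearly independent, so $\F$ has a resolution of the stated form.

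For the converse I would start from $\F$ having a resolution (\ref{M_0_resolution}) with the two listed conditions on $\f$. The coprimality of the maximal minors of the $2 \times 3$ block identifies the cokernel of that block with $\O_Z$ for a length-$3$ subscheme $Z$ lying on no line of bidegree $(1, 0)$ or $(0, 1)$, and exhibits $\F$ as an extension of $\O_Z$ by $\O_C$ for a curve $C$ of bidegree $(2, 4)$; from the resolution $\F$ has Hilbert polynomial $4m + 2n + 1$. Semi-stability of $\F$ then follows from a slope estimate using the stability of $\O_C$, as in the analogous arguments of \cite{genus_two}; the cohomology computation above gives $\H^0(\F(0, -1)) = 0$; and applying the spectral sequence (\ref{E_1}) to such an $\F$ recovers $\Ker(\f_1) \simeq \O(-2, -4)$ together with the condition on $\operatorname{supp}(\Coker(\f_1))$. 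Hence $\F$ gives a point of $\MM_0$. Finally, $\MM_0$ is open because the locus of sheaves in $\MM$ admitting a resolution of a fixed shape, with defining matrix constrained to a prescribed open set, is open, by the usual semicontinuity argument.

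The step I expect to be the main obstacle is the non-vanishing of the scalar component of $u$, which is what collapses the horseshoe resolution to the two-term resolution (\ref{M_0_resolution}); on the converse side, the delicate points are the verification of semi-stability for an arbitrary admissible $\f$ and the bookkeeping of bidegrees ensuring that $Z$ avoids both families of lines.
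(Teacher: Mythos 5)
Your forward direction is sound and takes a genuinely different route from the paper's: instead of running the horseshoe lemma on (\ref{generic_extension}) and cancelling the redundant $\O(-2,-4)$ summand by an argument in the style of \cite{illinois}, the paper simply writes down a matrix $\f$ whose first two rows are the map $\zeta$ of (\ref{Z_resolution}) and whose third row $(\f_{31},\f_{32},\f_{33})$ is chosen so that $\zeta_1\f_{31}-\zeta_2\f_{32}+\zeta_3\f_{33}$ is an equation of $C$, and then identifies $\F\simeq\Coker(\f)$ by Lemma \ref{unique_extension}; your cancellation step can indeed be justified (if the scalar component vanished, $\F$ would split off $\O_Z$, contradicting purity), and your derivation of the linear independence of $\f_{12},\f_{22}$ from $\H^0(\F(0,-1))=0$ is a harmless variant of the paper's derivation from $\H^0(\F)\simeq\CC$ via Proposition \ref{vanishing}.

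The genuine gap is in the converse, exactly at the point you yourself flag as delicate: semi-stability of a sheaf with resolution (\ref{M_0_resolution}) does not follow from ``a slope estimate using the stability of $\O_C$''. First, stability of $\O_C$ for an arbitrary (possibly reducible or non-reduced) curve of bidegree $(2,4)$ is not among the results quoted in the paper and would itself require proof. More importantly, even granting it, the estimate $\chi(\E)\le\chi(\E\cap\O_C)+3$ is much weaker here than in Proposition \ref{M_2}, because the quotient $\O_Z$ has length $3$ rather than $1$: it leaves room for destabilizing subsheaves of small multiplicity. For instance, if $C$ contains a line $L$ of bidegree $(0,1)$ through two of the three points of $Z$ --- a configuration not excluded by the hypotheses on $\f$ --- a subsheaf of $\F$ with Hilbert polynomial $m+1$ (slope $1>1/6$) is not ruled out by any slope comparison with $\O_C$; the same applies to the cases $m+2$ and $2m+1$. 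Eliminating all such possibilities is precisely the bulk of the paper's proof: a case-by-case analysis over the admissible Hilbert polynomials of a semi-stable destabilizing subsheaf ($n+1$, $m+1$, $m+n+1$, $m+2n+1$, $2m+n+1$, $3m+n+1$, $4m+n+1$, $2m+2n+1$, $3m+2n+1$), carried out with commutative diagrams built from the resolutions (\ref{(r,0)}), (\ref{(r,1)}), (\ref{(1,s)}) of \cite{ballico_huh}, and crucially using $\H^0(\F)\simeq\CC$, the emptiness of $\M(2m+1)$, and the fact that the kernel of the projection of $\f$ onto $2\O(0,-2)$ is $\O(-2,-4)$. Your proposal supplies none of these arguments, so the converse inclusion is not established.
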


\begin{proof}
Let $\F$ give a point in $\MM_0$. Let $Z$ and $C$ be as in Lemma \ref{generic_sheaves}.
By hypothesis $Z$ is not contained in a line of degree $(1, 0)$ or $(0, 1)$,
hence ${\mathcal Ext}^2(\O_Z, \O) \simeq \Coker(\zeta)$ as in (\ref{Z_resolution}).
Let $\zeta_1$, $\zeta_2$, $\zeta_3$ be the maximal minors of $\zeta$.
They are the defining polynomials of $Z$, hence we can find
$\f_{31} \in V_1^* \tensor \Sym^3 V_2^*$, $\f_{32} \in \CC \tensor \Sym^3 V_2^*$, $\f_{33} \in V_1^* \tensor \Sym^2 V_2^*$
such that $\zeta_1 \f_{31} - \zeta_2 \f_{32} + \zeta_3 \f_{33}$ is the polynomial defining $C$.
Let
\[
\f = \left[
\begin{array}{ccc}
& \zeta \\
\f_{31} & \f_{32} & \f_{33}
\end{array}
\right].
\]
Then $\Coker(\f)$ is an extension of ${\mathcal Ext}^2(\O_Z, \O)$ by $\O_C$ without zero-dimensional torsion and,
by Lemma \ref{generic_sheaves}, the same is true of $\F$.
From Lemma \ref{unique_extension} we deduce that $\F \simeq \Coker(\f)$.
By Proposition \ref{vanishing}, $\H^0(\F) \simeq \CC$, hence the map $\H^1(\O(0, -3)) \to \H^1(2\O(0, -2))$ is injective,
which is equivalent to saying that $\f_{12}$ and $\f_{22}$ are linearly independent.
We have shown that $\F$ has resolution (\ref{M_0_resolution}).

Conversely, assume that $\F$ has resolution (\ref{M_0_resolution}).
Then $\H^0(\F) \simeq \CC$ because $\f_{12}$ and $\f_{22}$ are linearly independent.
From the snake lemma we see that $\F$ is an extension of ${\mathcal Ext}^2(\O_Z, \O)$ by $\O_C$,
where $Z$ is the zero-dimensional scheme of length $3$ given by the maximal
minors of the matrix obtained by deleting the third row of $\f$,
and $C$ is the curve of degree $(2, 4)$ defined by $\det(\f)$.
Thus, $\H^0(\F)$ generates $\O_C$.
We will show that $\F$ is semi-stable. Assume that $\F$ had a destabilizing subsheaf $\F'$.
Then $\chi(\F') > 0$ and $\chi(\F') \le \dim_{\CC}^{} \H^0(\F) = 1$, hence $\chi(\F') = 1$, forcing $\H^0(\F') \simeq \CC$.
Thus $\H^0(\F') = \H^0(\F)$, hence $\O_C \subset \F'$, and hence $\F'$ has multiplicity $6$.
There are no destabilising subsheaves of $\F$ of multiplicity $6$.
Thus, $\F$ gives a point in $\MM$.
Since $\f_{12}$ and $\f_{22}$ are linearly independent, we have $\H^0(\F(0, -1)) = \{ 0 \}$.
Since $\H^0(\F)$ generates $\O_C$, $\Ker(\theta_1) \simeq \O(-2, -4)$ and $\Coker(\theta_1) \simeq {\mathcal Ext}^2(\O_Z, \O)$.
Note that $Z$ is not contained in a line of degree $(1, 0)$ or $(0, 1)$.
In conclusion, $\F$ gives a point in $\MM_0$.
\end{proof}

\begin{proposition}
\label{rationality}
The variety $\MM$ is rational.
\end{proposition}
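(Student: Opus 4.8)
The plan is to exhibit a dense open subset of $\MM$ that is isomorphic to an open subset of an affine (or projective) space, using the generic resolution from Proposition \ref{M_0}. By that proposition, a sheaf $\F$ in the open set $\MM_0$ has a resolution
\[
0 \lra \O(-1, -3) \oplus \O(0, -3) \oplus \O(-1, -2) \stackrel{\f}{\lra} \O(0, -2) \oplus \O(0, -2) \oplus \O \lra \F \lra 0,
\]
so $\MM_0$ is the image of an open subset of the affine space
\[
\mathbb{W} = \Hom\bigl(\O(-1, -3) \oplus \O(0, -3) \oplus \O(-1, -2),\ \O(0, -2) \oplus \O(0, -2) \oplus \O\bigr)
\]
under the map sending $\f$ to $\Coker(\f)$. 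First I would record $\dim \mathbb{W}$ by adding up the dimensions of the nine $\Hom$-spaces between line bundles on $\PP^1 \times \PP^1$, and identify the group $G$ acting on $\mathbb{W}$ with the same orbits as the isomorphism classes of cokernels, namely
\[
G = \bigl(\Aut(\O(0, -2) \oplus \O(0, -2) \oplus \O) \times \Aut(\O(-1, -3) \oplus \O(0, -3) \oplus \O(-1, -2))\bigr)/\CC^*,
\]
acting by $(g, h) \cdot \f = g \f h^{-1}$. Since the stabilizer of a point given by a stable sheaf is trivial (the only automorphisms of $\F$ are scalars, and these are already quotiented out), $\MM_0$ is a geometric quotient of an open subset $\mathbb{W}^{\mathrm s} \subset \mathbb{W}$ by the free action of $G$.

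The core of the argument is then to show this quotient is rational by slicing. The strategy, as in the analogous genus-two computations (e.g.\ \cite{genus_two}) and in Maican's earlier papers, is to use the $G$-action to bring $\f$ into a normal form with only finitely many free parameters, i.e.\ to find a locally closed $G$-invariant subset $\Sigma \subset \mathbb{W}^{\mathrm s}$ meeting the generic orbit in a single point (a section of the quotient over a dense open set), and such that $\Sigma$ is itself rational — typically an open subset of an affine space cut out by the vanishing of certain entries of the matrix $\f$. Concretely, one exploits that $\Aut(\O(0,-2) \oplus \O(0,-2) \oplus \O)$ and $\Aut(\O(-1,-3) \oplus \O(0,-3) \oplus \O(-1,-2))$ are large (containing $\mathrm{GL}$-blocks acting on the repeated summand $\O(0,-2)^2$, unipotent parts coming from the nonzero $\Hom$-spaces among the remaining summands, and scalars) to normalize, say, the block $[\f_{12}\ \f_{22}]^{\trans}$ (linearly independent by hypothesis) and to kill or standardize several entries. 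Counting: after normalization the number of remaining free parameters should be $\dim \mathbb{W} - \dim G = 17$, matching $\dim \MM$, and the residual parameter space is visibly an open subset of $\mathbb A^{17}$ (or a tower of affine-space bundles over it), hence rational; the birational map $\Sigma \dashrightarrow \MM$ then gives rationality of $\MM$.

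The main obstacle I expect is the bookkeeping in the normal-form step: one must verify that the chosen slice $\Sigma$ genuinely meets the generic $G$-orbit transversally in exactly one point, equivalently that the residual group acting on $\Sigma$ is trivial on a dense subset, and that the conditions ``$\f_{12}, \f_{22}$ linearly independent'' and ``maximal minors of the $2 \times 3$ block have no common factor'' translate into open (hence harmless for rationality) conditions on the parameters rather than forcing extra relations. A clean way to organize this is to stratify the reduction: first use the $\mathrm{GL}_2$ acting on $\O(0,-2)^2$ together with the translations from $\Hom(\O(0,-3), \O(0,-2)^2)$ and $\Hom(\O(-1,-2), \O(0,-2)^2)$ to reduce the first two rows of $\f$, then use the remaining automorphisms of the source to reduce the bottom row $[\f_{31}\ \f_{32}\ \f_{33}]$, and finally check that the leftover scalar torus acts with a rational quotient. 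Each individual step is a routine but somewhat lengthy linear-algebra-over-$\CC[x,y]$ computation, and the only real content is assembling them so that the orbit dimensions add up correctly and no unexpected isotropy survives.
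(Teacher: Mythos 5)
Your setup (the parameter space $\mathbb{W}$ of matrices $\f$, the group $G$, the dimension count $34-17=17$) is consistent, but the proof has a genuine gap: the entire content of the statement is the existence of the normal form, i.e.\ of a rational slice $\Sigma\subset\mathbb{W}$ meeting the generic $G$-orbit in exactly one point, and you do not construct it --- you only assert that the reduction is ``routine but somewhat lengthy'' and that the residue is ``visibly'' an open subset of $\mathbb{A}^{17}$. This is not automatic. The quotient map $\mathbb{W}^{\mathrm s}\to\MM_0$ is a torsor under $G=(\Aut\times\Aut)/\CC^*$, and while $\Aut(\O(0,-2)^2\oplus\O)$ and $\Aut(\O(-1,-3)\oplus\O(0,-3)\oplus\O(-1,-2))$ are special groups (extensions of products of $\mathrm{GL}$'s by unipotent groups), the quotient by the central $\CC^*$ need not be special, so Zariski-local triviality of $\mathbb{W}^{\mathrm s}\to\MM_0$ --- equivalently the existence of a rational section --- is precisely what must be proved, not assumed. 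Rationality of such quotients of spaces of morphisms of decomposable sheaves by the natural two-sided action is a known but nontrivial problem (it is the subject of separate work of Dr\'ezet and of the author in the $\PP^2$ case), and a proof by normal forms must actually exhibit the reduction and verify that no residual isotropy or nontrivial torsor survives.

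The paper avoids all of this by using structure results already established: by Lemma \ref{generic_sheaves} every $\F\in\MM_0$ is the unique (Lemma \ref{unique_extension}) torsion-free extension of $\O_Z$ by $\O_C$ for a length-$3$ scheme $Z$ and a $(2,4)$-curve $C\supset Z$, so $\MM_0\to\Hilb_{\PP^1\times\PP^1}(3)$, $\F\mapsto Z$, realizes $\MM_0$ as a $\PP^{11}$-bundle (the linear system of $(2,4)$-curves through $Z$) over an open subset of the rational $6$-fold $\Hilb_{\PP^1\times\PP^1}(3)$; rationality is then immediate. If you want to keep your matrix-theoretic route, you must either carry out the normal-form computation in full or supply an argument for the existence of a rational section; otherwise I would recommend switching to the fibration over the Hilbert scheme, which your own Proposition \ref{M_0} already puts within reach.
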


\begin{proof}
By Lemma \ref{length_3_scheme}, Lemma \ref{unique_extension}, Lemma \ref{generic_sheaves} and Proposition \ref{M_0},
the open subset of $\MM_0$, given by the condition that $Z$ consist of three distinct points,
is a $\PP^{11}$-bundle over an open subset of $\Hilb_{\PP^1 \times \PP^1}(3)$, so it is rational.
\end{proof}

\begin{proposition}
\label{M_2'}
Let $\F$ be an extension as in (\ref{generic_extension}) without zero-dimensional torsion, for a curve $C$ of degree $(2, 4)$
and a subscheme $Z \subset C$ that is the intersection of two curves of degree $(1, 0)$, respectively, $(0, 3)$.
Then $\F$ gives a point in $\MM$. Let $\MM_2' \subset \MM$ be the subset of such sheaves $\F$.
Then $\MM_2'$ is closed, irreducible, of codimension $2$, and can be described as the set of sheaves $\F$ having a resolution of the form
\begin{equation}
\label{M_2'_resolution}
0 \lra \O(-2, -1) \oplus \O(-1, -4) \stackrel{\f}{\lra} \O(-1, -1) \oplus \O \lra \F \lra 0,
\end{equation}
with $\f_{11} \neq 0$, $\f_{12} \neq 0$.
\end{proposition}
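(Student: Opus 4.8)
The plan is to establish the two descriptions of $\MM_2'$ simultaneously, using Lemma \ref{unique_extension} as a bridge: since a torsion-free extension of $\O_Z$ by $\O_C$ is determined up to isomorphism by the pair $(C, Z)$, it suffices, for each such pair as in the statement, to exhibit one sheaf with a resolution as in (\ref{M_2'_resolution}) which is an extension of that form. Thus I would fix $C$ of bidegree $(2, 4)$ and $Z \subset C$ of length $3$ lying on a line $L_1$ of bidegree $(1, 0)$. Writing $\ell_1$ for an equation of $L_1$ and $q$ for a form of bidegree $(0, 3)$ restricting on $L_1 \simeq \PP^1$ to a cubic cutting out $Z$, one has $\I_Z = (\ell_1, q)$, and the Koszul complex of the regular sequence $(\ell_1, q)$ resolves $\I_Z$. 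As $Z \subset C$, the equation $f \in \H^0(\O(2, 4))$ of $C$ lies in $\H^0(\I_Z(2, 4))$; twisting the Koszul resolution by $\O(2, 4)$ and using $\H^1(\O(1, 1)) = 0$ gives $f = \ell_1 a - q b$ for forms $a$, $b$ of bidegrees $(1, 4)$, $(2, 1)$. The matrix $\f$ with $\f_{11} = \ell_1$, $\f_{12} = q$, $\f_{21} = b$, $\f_{22} = a$ then satisfies $\det \f = f \neq 0$, hence is injective, and $\F := \Coker(\f)$ has Hilbert polynomial $4m + 2n + 1$, is supported on $C$, and -- carrying a length-one locally free resolution on a smooth surface -- has no zero-dimensional torsion.

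Second, I would pin down the extension structure of $\F$. Let $e_1$, $e_2$ be the images in $\F$ of the standard sections of $\O(-1, -1) \oplus \O$, so that $\ell_1 e_1 + b e_2 = 0$ and $q e_1 + a e_2 = 0$. The subsheaf $\O e_2$ has annihilator exactly $(f)$: from $g e_2 = \f(w)$ with $w = (w_1, w_2)$ and $w_1 \ell_1 + w_2 q = 0$, the regularity of $(\ell_1, q)$ forces $w_2 = \ell_1 w'$, $w_1 = -q w'$, so $g = w_1 b + w_2 a = (\ell_1 a - q b) w' = f w'$; hence $\O e_2 \simeq \O_C$. Moreover $\F / \O e_2$ is generated by $e_1$ with $\ell_1 e_1 = q e_1 = 0$, so it is a quotient of $\O(-1, -1)|_Z \simeq \O_Z$, and as $\chi(\F / \O e_2) = \chi(\F) - \chi(\O_C) = 3$ it equals $\O_Z$. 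Thus $0 \to \O_C \to \F \to \O_Z \to 0$ with $\F$ torsion-free, so by Lemma \ref{unique_extension} this $\F$ is the extension attached to $(C, Z)$. The same computations applied to an arbitrary resolution (\ref{M_2'_resolution}) with $\f_{11} \neq 0$, $\f_{12} \neq 0$ show conversely that the corresponding sheaf is an extension of $\O_Z$ by $\O_C$, where $Z = V(\f_{11}, \f_{12})$ is the complete intersection of curves of bidegrees $(1, 0)$ and $(0, 3)$; this gives both halves of the claimed description.

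Third, one must check semi-stability and describe $\MM_2'$ globally. For semi-stability I would imitate the proof of Proposition \ref{M_0}: a destabilizing subsheaf may be taken semi-stable, and since $\H^0(\F) \simeq \CC$ is visible from (\ref{M_2'_resolution}), such an $\E$ has $\chi(\E) = 1$ and support of bidegree $(s', r')$ with $s' + r' \le 5$. The values $P_{\E} = 2n + 1, 2m + 1, 3m + 1, 4m + 1$ are excluded by \cite[Proposition 10]{ballico_huh}; for $P_{\E} = 2m + 2n + 1$ and $3m + 2n + 1$ one passes to $\F / \E$, which is semi-stable in $\M(2m)$, respectively $\M(m)$, and compares its resolution (\ref{(r,0)}) with (\ref{M_2'_resolution}); in the remaining cases one lifts the resolution (\ref{(1,s)}) or (\ref{(r,1)}) of $\E$ into (\ref{M_2'_resolution}) and uses the vanishing of the pertinent $\Hom$-groups together with $\f_{11}, \f_{12} \neq 0$ to reach a contradiction. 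Finally, $\MM_2'$ is the image of the morphism sending a triple $(L_1, Z, C)$ -- a line of bidegree $(1, 0)$, a length-$3$ subscheme $Z \subset L_1$, and a curve $C \supset Z$ of bidegree $(2, 4)$ -- to the sheaf $\F$ built above. The source is a $\PP^{11}$-bundle (note $\H^0(\I_Z(2, 4))$ has dimension $12$, since $\H^1(\I_Z(2, 4)) = 0$) over the variety of pairs (a $(1,0)$-line, a length-$3$ subscheme of it), which is a $\PP^3$-bundle over $\PP^1$; so the source is irreducible, projective, of dimension $11 + 3 + 1 = 15$. The morphism is injective: $C = \operatorname{supp}(\F)$, the copy $\O_C \subset \F$ is the image of the unique section (as $\H^0(\F) \simeq \CC$), $Z = \F / \O_C$, and $L_1$ is the unique line of bidegree $(1, 0)$ through $Z$. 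Hence $\MM_2'$ is closed, irreducible, of dimension $15$, i.e., of codimension $2$ in $\MM$.

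I expect the semi-stability step to be the main obstacle, being the one that still requires a case-by-case argument -- kept short here by the smallness of (\ref{M_2'_resolution}) -- with particular care needed in the cases $P_{\E} = 2m + 2n + 1$, $3m + 2n + 1$ to verify that $\F / \E$ is genuinely semi-stable before invoking the classification of sheaves of small Hilbert polynomial.
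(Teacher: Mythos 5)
Your proposal follows essentially the same route as the paper: build the $2 \times 2$ matrix $\f$ from the defining equations of $Z$ and $C$ (the paper asserts the existence of $\f_{21}, \f_{22}$ directly, you derive it from the Koszul resolution of $\I_Z$), identify $\F$ with $\Coker(\f)$ via Lemma \ref{unique_extension}, and exclude destabilizing subsheaves $\E$ by the same case-by-case comparison of (\ref{M_2'_resolution}) with the resolutions (\ref{(r,0)}), (\ref{(r,1)}), (\ref{(1,s)}) from \cite{ballico_huh}. The only real difference is one of emphasis: you leave the semi-stability casework -- the bulk of the paper's proof, in particular the diagram analysis in the cases $P_{\E} = 2m + 2n + 1$ and $3m + 2n + 1$ -- as an outline, while adding an explicit parametrization argument for the closedness, irreducibility and codimension-$2$ claims, which the paper leaves implicit.
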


\begin{proof}
Note that $\O_Z \simeq {\mathcal Ext}^2(\O_Z, \O)$.
Let $\F$ be an extension of $\O_Z$ by $\O_C$ without zero-dimensional torsion.
Let $\f_{11} \in V_1^* \tensor \CC$ and $\f_{12} \in \CC \tensor \Sym^3 V_2^*$ be the defining polynomials of $Z$.
We can find $\f_{21} \in \Sym^2 V_1^* \tensor V_2^*$ and $\f_{22} \in V_1^* \tensor \Sym^4 V_2^*$ such that $\f_{11} \f_{22} - \f_{12} \f_{21}$
is the defining polynomial of $C$.
Then the cokernel of $\f = (\f_{ij})_{1 \le i, j \le 2}$ is an extension of $\O_Z$ by $\O_C$ without zero-dimensional torsion,
hence, by Lemma \ref{unique_extension}, $\F \simeq \Coker(\f)$.
Conversely, arguing as in Proposition \ref{M_0}, we can show that any sheaf of the form $\Coker(\f)$, with $\f$ as in (\ref{M_2'_resolution}),
is semi-stable.
\end{proof}

\noindent
\emph{Proof of Theorem \ref{main_theorem}}. By Propositions \ref{M_2}, \ref{M_3_4}, \ref{M_0} and \ref{M_2'},
$\MM$ is the union of the subvarieties $\MM_0$, $\MM_2$, $\MM_2'$, $\MM_3$, $\MM_4$.
For $[\F] \in \MM_2$, we have $\H^0(\F) \simeq \CC^2$, whereas, for $[\F]$ in any of the other subvarieties, we have $\H^0(\F) \simeq \CC$.
Thus, $\MM_2$ is disjoint from the other subvarieties.
For $[\F] \in \MM_0 \cup \MM_2'$, $\H^0(\F)$ generates the structure sheaf of a curve $C$ of degree $(2, 4)$,
whereas, for $[\F] \in \MM_3 \cup \MM_4$, $\H^0(\F)$ generates the structure sheaf of a curve $Q$ of degree $(2, 3)$.
Thus, $\MM_0 \cup \MM_2'$ is disjoint from $\MM_3 \cup \MM_4$.
For $[\F] \in \MM_0$, the support of $\F/\O_C$ is not contained in a line of degree $(1, 0)$,
whereas, for $[\F] \in \MM_2'$, the support of $\F/\O_C$ is contained in a line of degree $(1, 0)$.
Thus, $\MM_0$ is disjoint from $\MM_2'$.
For $[\F] \in \MM_3$, $\F/\O_Q$ has zero-dimensional torsion, whereas, for $[\F] \in \MM_4$, $\F/\O_Q$ is pure.
Thus, $\MM_3$ is disjoint from $\MM_4$.
In conclusion, the subvarieties in question form a decomposition of $\MM$.
\qed


\section{Variation of the moduli spaces of $\alpha$-semi-stable pairs}
\label{variation}

A \emph{coherent system} $\Lambda = (\Gamma, \F)$ on $\PP^1 \times \PP^1$
consists of a coherent algebraic sheaf $\F$ on $\PP^1 \times \PP^1$ and a vector subspace $\Gamma \subset \H^0(\F)$.
Let $\alpha$ be a positive real number and let $P_{\F}(m, n) = rm + sn + t$ be the Hilbert polynomial of $\F$.
We define the $\alpha$-\emph{slope} of $\Lambda$ as the ratio
\[
\p_{\alpha}(\Lambda) = \frac{\alpha \dim \Gamma + t}{r+ s}.
\]
We say that $\Lambda$ is $\alpha$-\emph{semi-stable}, respectively, $\alpha$-\emph{stable}, if $\F$ is pure and for any proper
coherent subsystem $\Lambda' \subset \Lambda$ we have $\p_{\alpha}(\Lambda') \le \p_{\alpha}(\Lambda)$, respectively,
$\p_{\alpha}(\Lambda') < \p_{\alpha}(\Lambda)$.
According to \cite{lepotier_asterisque} and \cite{he},
for fixed positive real number $\alpha$, non-negative integer $k$ and linear polynomial $P(m, n)$,
there is a coarse moduli space, denoted $\Syst(\PP^1 \times \PP^1, \alpha, k, P)$,
which is a projective scheme whose closed points
are in a bijective correspondence with the set of S-equivalence classes of $\alpha$-semi-stable coherent systems $(\Gamma, \F)$
on $\PP^1 \times \PP^1$ for which $\dim \Gamma = k$ and $P_{\F} = P$.
When $k = 0$ this space is $\M(P)$.
A coherent system for which $\dim \Gamma = 1$ will be called a \emph{pair}.
Our main concern is with the moduli space of $\alpha$-semi-stable pairs $\M^{\alpha}(P) = \Syst(\PP^1 \times \PP^1, \alpha, 1, P)$.
It is known that there are finitely many positive rational numbers $\alpha_1 < \ldots < \alpha_n$, called \emph{walls},
such that the set of $\alpha$-semi-stable pairs with Hilbert polynomial $P$ remains unchanged as $\alpha$ varies in one of the intervals 
$(0, \alpha_1)$, or $(\alpha_i, \alpha_{i+1})$, or $(\alpha_n, \infty)$.
In fact, from the definition of $\alpha$-semi-stability, we can see that, if $\alpha$ is a wall, then there is a strictly $\alpha$-semi-stable pair,
i.e. a pair $\Lambda$ for which there exists a subpair or quotient pair $\Lambda'$, such that $\p_{\alpha}(\Lambda) = \p_{\alpha}(\Lambda')$. This equation has only rational solutions in $\alpha$.
For $\alpha \in (\alpha_n, \infty)$ we write $\M^{\infty}(P) = \M^{\alpha}(P)$.
For $\alpha \in (0, \alpha_1)$ we write $\M^{0+}(P) = \M^{\alpha}(P)$.
If $\gcd(r+s, t) = 1$, then, from the definition of $\alpha$-semi-stability, we see that $(\Gamma, \F) \in \M^{0+}(P)$ if and only if
$\F$ is semi-stable. At the other extreme we have the following proposition due to Pandharipande and Thomas.

\begin{proposition}
\label{M_infinity}
For $\alpha \gg 0$, a pair $\Lambda = (\Gamma, \F)$ is $\alpha$-semi-stable if and only if $\F$ is pure and $\F/\O_C$ has dimension zero or is zero,
where $\O_C$ is the subsheaf of $\F$ generated by $\Gamma$. In particular, $t \ge r + s - rs$.
The scheme $\M^{\infty}(rm + sn + t)$ is isomorphic to the relative Hilbert scheme of zero-dimensional schemes of length $ t - r - s + rs$
contained in curves of degree $(s, r)$.
\end{proposition}

\begin{proof}
Assume that $(\Gamma, \F)$ is $\alpha$-semi-stable for $\alpha \gg 0$. If $P_{\O_C}(m, n) = r'm + s'n + t'$ with $r' + s' < r + s$, then
\[
\p_{\alpha}(\Gamma, \O_C) = \frac{\alpha + t'}{r' + s'} > \frac{\alpha + t}{r + s} = \p_{\alpha}(\Lambda) \quad \text{for $\alpha \gg 0$},
\]
which contradicts our hypothesis. Thus, $P_{\O_C}(m, n) = rm + sn + r + s - rs$.
Conversely, assume that $\O_C$ has this Hilbert polynomial and that $\F$ is pure.
Let $\Lambda' = (\Gamma', \F') \subset \Lambda$ be a proper coherent subsystem with $P_{\F'}(m, n) = r'm + s'n + t'$.
If $\Gamma' = \{ 0 \}$, then
\[
\p_{\alpha}(\Lambda') = \frac{t'}{r' + s'} < \frac{\alpha + t}{r + s} = \p_{\alpha}(\Lambda) \quad \text{for $\alpha \gg 0$}.
\]
If $\Gamma' = \Gamma$, then $\O_C \subset \F'$, hence $r' = r$, $s' = s$, $t' < t$, and we have
\[
\p_{\alpha}(\Lambda') = \frac{\alpha + t'}{r + s} < \frac{\alpha + t}{r + s} = \p_{\alpha}(\Lambda).
\]
The isomorphism between $\M^{\infty}(P)$ and the relative Hilbert scheme is a particular case of \cite[Proposition B.8]{pandharipande_thomas}.
As a map, it is given by $(\Gamma, \F) \mapsto (Z, C)$, where $Z \subset C$ is the subscheme introduced at Lemma \ref{unique_extension}.
\end{proof}

\begin{corollary}
\label{M_infinity_smooth}
The scheme $\M^{\infty}(4m + 2n +1)$ is isomorphic to a fiber bundle with fiber $\PP^{11}$ and base the Hilbert scheme of three points
in $\PP^1 \times \PP^1$, so it is smooth.
\end{corollary}

\begin{proof}
The relative Hilbert scheme of pairs $(Z, C)$, where $C \subset \PP^1 \times \PP^1$ is a curve of degree $(2, 4)$ and $Z \subset C$
is a subscheme of dimension zero and length $3$, has fiber $\PP(\H^0(\I_Z(2, 4)))$ over $Z$.
If $Z$ is not contained in a line of degree $(0, 1)$ or $(1, 0)$, then, from Lemma \ref{length_3_scheme}, we deduce that
$\H^0(\I_Z(2, 4)) \simeq \CC^{12}$. If $Z$ is contained in such a line, then it is straightforward to check that $\H^0(\I_Z(2, 4)) \simeq \CC^{12}$.
\end{proof}

\begin{lemma}
\label{alpha_nonempty}
Assume that $\M^{\alpha}(rm + sn + t) \neq \emptyset$. Then $t \ge r + s - rs$.
For $r$, $s$ non-negative integers, not both zero, and $\alpha \in (0, \infty)$, we have
\[
\M^{\alpha}(rm + sn + r + s - rs) \simeq \M^{\infty}(rm + sn + r + s - rs).
\]
\end{lemma}

\begin{proof}
We use induction on $r + s$. If $r + s = 1$, or if there is no wall in $[\alpha, \infty)$,
then $\M^{\alpha}(rm + sn + t) = \M^{\infty}(rm + sn + t)$ and the conclusion follows from Proposition \ref{M_infinity}.
Assume that $r + s > 1$ and that there is a wall $\alpha' \in [\alpha, \infty)$.
There is a pair $\Lambda \in \M^{\alpha'}(rm + sn + t)$ and a subpair or quotient pair $\Lambda' \in \M^{\alpha'}(r'm + s'n + t')$,
such that $\p_{\alpha'}(\Lambda) = \p_{\alpha'}(\Lambda')$. We have $0 \le r' \le r$, $0 \le s' \le s$, $1 \le r' + s' < r +s$,
\[
\frac{\alpha' + t}{r + s} = \frac{\alpha' + t'}{r' + s'},
\]
hence
\begin{align*}
t & = \frac{(r + s - r' - s') \alpha' + (r + s) t'}{r' + s'}  > \frac{r + s}{r' + s'} t' \\
& \ge \frac{r + s}{r' + s'} (r' + s' - r's') \qquad \text{(by the induction hypothesis)} \\
& = r + s - \frac{r + s}{r' + s'} r's' \ge r + s - rs.
\end{align*}
If $t = r + s - rs$, then there is no wall in $[\alpha, \infty)$, hence we have an isomorphism as in the lemma.
\end{proof}

\begin{proposition}
\label{walls}
With respect to $P(m, n) = 4m + 2n + 1$ there are only two walls at $\alpha_1 = 5$ and $\alpha_2 = 11$.
\end{proposition}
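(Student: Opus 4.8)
The plan is to translate the word \emph{wall} into a finite numerical search and then to discard every candidate other than $5$ and $11$.

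First recall how walls are detected. For a pair $(\Gamma, \F)$ with $P_{\F} = 4m + 2n + 1$, the Hilbert polynomial with respect to $\O(1, 1)$ has leading coefficient $6$, so the $\alpha$-reduced Hilbert polynomial of $(\Gamma, \F)$ has constant term $\frac{1 + \alpha}{6}$, while that of a sub-pair $(\Gamma', \F')$ — where $\Gamma' = \Gamma \cap \H^0(\F')$ and $\F'$ has multiplicity $r'$ and $\chi(\F') = c'$ — has constant term $\frac{c' + \alpha \dim \Gamma'}{r'}$. A number $\alpha_0 \in \mathbb{Q}_{>0}$ is a wall precisely when there is a short exact sequence of $\alpha_0$-semistable pairs $0 \to (\Gamma', \F') \to (\Gamma, \F) \to (\Gamma'', \F'') \to 0$ whose three members share the same reduced Hilbert polynomial; since the leading coefficients agree automatically, this reduces to the single equation
\[
6\bigl(c' + \alpha_0 \dim \Gamma'\bigr) = r'\,(1 + \alpha_0), \qquad \dim\Gamma' + \dim\Gamma'' = 1.
\]
As the sheaves involved are pure of multiplicity $6$, the case $r' = 6$ makes $\F''$ zero-dimensional, whereupon the equation forces $\F' = \F$; so we may assume $1 \le r' \le 5$, and then $\alpha_0 = \frac{r' - 6c'}{6 - r'}$ when $\dim\Gamma' = 1$ and $\alpha_0 = \frac{6c' - r'}{r'}$ when $\dim\Gamma' = 0$. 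In either case the requirement $\alpha_0 > 0$ confines $c'$ to a finite range.

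Next I would run the search, using two elimination devices. The first is \cite[Proposition 10]{ballico_huh}: the member of the exact sequence that carries no section is a semistable sheaf, hence cannot have Hilbert polynomial $2m+1$, $2n+1$, $3m+1$ or $4m+1$ up to a twist by $\O(1,1)$. The second concerns the member $(\Delta, \G)$ that does carry a section: the image of that section is $\O_D$ for an effective divisor $D$, of some bidegree $(p, q)$, with $\O_D \subseteq \G$, and $(\Delta, \O_D)$ must not destabilise $(\Gamma, \F)$; substituting $\chi(\O_D) = p + q - pq$ this inequality becomes $5(p+q) - 6pq \le \bigl((p+q) - 6\bigr)\alpha_0$, which for $\alpha_0 > 2$ can hold only if $D$ has bidegree $(2, 3)$ or is the whole bidegree-$(2,4)$ support, and when $D$ equals the support of $\G$ it moreover gives $\chi(\G) \ge \chi(\O_D)$. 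Feeding these in, together with the bookkeeping that the support bidegrees of $\F'$ and $\F''$ add up to $(2, 4)$, one finds that for $\alpha_0 > 2$ the only possibilities have $r' = 5$: either $\dim\Gamma' = 1$ with $\F' = \O_Q$ or $\F'$ a non-split extension of $\CC_p$ by $\O_Q$, giving $\alpha_0 = 11$ and $\alpha_0 = 5$, or the transposed configurations with $\dim\Gamma' = 0$ and $\F' = \O_L(1, 0)$ or $\O_L$, which reproduce $11$ and $5$. The finitely many values $\alpha_0 \le 2$ not reached by the first estimate are then excluded one at a time: in each such case either a line inside the support of the section-carrying member destabilises the pair — its reduced polynomial being far too large — or the quotient has Hilbert polynomial $2m+1$ and so cannot be semistable.

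Finally I would confirm that $5$ and $11$ are genuine walls by exhibiting strictly semistable pairs: for $\alpha = 11$ take $\F$ in $\MM_4'$ with its unique section $\Gamma$, which factors through $\O_Q$ and produces $0 \to (\Gamma, \O_Q) \to (\Gamma, \F) \to (0, \O_L(1,0)) \to 0$; for $\alpha = 5$ take $\F$ in $\MM_4''$, where the section factors through $\E$ (since $\H^0(\O_Q) \neq 0$ maps into $\H^0(\F) \simeq \CC$) and produces $0 \to (\Gamma, \E) \to (\Gamma, \F) \to (0, \O_L) \to 0$. In both sequences the three reduced Hilbert polynomials coincide, with common constant term $2$, respectively $1$, and the sub- and quotient-pairs are $\alpha_0$-stable because $\O_Q$, $\O_L(1,0)$, $\O_L$ are stable sheaves and the section of $\E$ (respectively of $\O_Q$) already lies in a subsheaf of strictly smaller reduced polynomial. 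The real obstacle is the search itself: the numerical list has many entries, and several can be cleared only by combining the destabilising-section estimate with the emptiness statements, so one must take care that nothing besides $5$ and $11$ survives.
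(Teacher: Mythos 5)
Your proposal is correct and follows essentially the same route as the paper: both reduce the wall condition to the numerical equation for the piece carrying the section, bound that piece by the slope inequality for the structure sheaf $\O_D$ of a curve in its support (which leaves only the multiplicity-$5$ pieces with $\chi \in \{-1, 0\}$, giving $\alpha = 11, 5$, plus the single leftover candidate $\alpha = 2$), and kill $\alpha = 2$ because $\M(2m+1) = \emptyset$ by Ballico--Huh. The only differences are cosmetic: your $\alpha_0 \le 2$ discussion can be streamlined, since your own inequality already excludes every section-image bidegree other than $(2,2)$, $(2,3)$, $(2,4)$ for all $\alpha > 0$ (so the ``destabilising line'' device is unnecessary), and your explicit check that $5$ and $11$ are actually realized is a small addition that the paper leaves to the later sections.
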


\begin{proof}
Assume that $\alpha$ is a wall. Then there are pairs $\Lambda \in \M^{\alpha}(4m + 2n + 1)$ and $\Lambda' \in \M^{\alpha}(rm + sn + t)$
such that $\Lambda'$ is a subpair or a quotient pair of $\Lambda$ and
\begin{equation}
\label{alpha}
\frac{\alpha + t}{r + s} = \frac{\alpha + 1}{6}.
\end{equation}
Here $0 \le r \le 4$, $0 \le s \le 2$, $1 \le r + s \le 5$. By Lemma \ref{alpha_nonempty}, we also have $t \ge r + s - rs$.
Assume that $r = 3$, $s = 2$, $t \ge -1$. Equation (\ref{alpha}) has solutions
$\alpha_1 = 5$ for $t = 0$ and $\alpha_2 = 11$ for $t = -1$. Assume that $r = 2$, $s = 2$, $t \ge 0$.
Equation (\ref{alpha}) has solution $\alpha = 2$ for $t = 0$.
In this case either $\Lambda \in \Ext^1(\Lambda', \Lambda'')$ or $\Lambda \in \Ext^1(\Lambda'', \Lambda')$
for some $\Lambda'' \in \M(2m + 1)$.
However, according to \cite[Proposition 10]{ballico_huh}, $\M(2m + 1) = \emptyset$.
Thus, there is no wall at $\alpha = 2$.
For all other choices of $r$ and $s$ equation (\ref{alpha}) has no positive solution in $\alpha$.
\end{proof}

\noindent
Denote $\MM^{\alpha} = \M^{\alpha}(4m + 2n + 1)$.
For $\alpha \in (11, \infty)$, write $\MM^{\alpha} = \MM^{\infty}$.
For $\alpha \in (5, 11)$, write $\MM^{\alpha} = \MM^{5+} = \MM^{11-}$.
For $\alpha \in (0, 5)$, write $\MM^{\alpha} = \MM^{0+}$.
The inclusions of sets of $\alpha$-semi-stable pairs induce the birational morphisms
\[
\xymatrix
{
\MM^{\infty} \ar[dr]_-{\rho_{\infty}} & & \MM^{11-} \ar[dl]^-{\rho_{11}} \ar@{=}[r] & \MM^{5+} \ar[dr]_-{\rho_5} & & \MM^{0+} \ar[dl]^-{\rho_0} \\
& \MM^{11} & & & \MM^5
}
\]
In view of Theorem \ref{wall_crossing}, the above are flipping diagrams
(consult \cite[Remark 5]{genus_two} for details).

\begin{remark}
\label{flipping_base}
From the proof of Proposition \ref{walls}, we see that an S-equivalence class of
strictly $\alpha$-semi-stable elements in $\MM^{11}$
consists of (split or non-split) extensions of $(\Gamma_1, \E_1)$ by $(0, \O_L(1, 0))$, 
together with the extensions of $(0, \O_L(1, 0))$ by $(\Gamma_1, \E_1)$.
Here $(\Gamma_1, \E_1)$ lies in $\M^{11}(3m + 2n -1)$ and
$L \subset \PP^1 \times \PP^1$ is a line of degree $(0, 1)$.
We say, for short, that the strictly $\alpha$-semi-stable elements of $\MM^{11}$ are of the form
$(\Gamma_1, \E_1) \oplus(0, \O_L(1, 0))$.
According to Lemma \ref{alpha_nonempty} and Proposition \ref{M_infinity},
$\E_1 \simeq \O_Q$ for a quintic curve $Q \subset \PP^1 \times \PP^1$ of degree $(2, 3)$.
Thus, $\M^{11}(3m + 2n -1) \simeq \PP^{11}$.

Again from the proof of Proposition \ref{walls}, we see that the
strictly $\alpha$-semi-stable elements in $\MM^5$ are of the form
$(\Gamma, \E) \oplus (0, \O_L)$, where $(\Gamma, \E) \in \M^5(3m + 2n)$.
We claim that $\M^5(3m + 2n) \simeq \M^{\infty}(3m + 2n)$.
To see this, we will show that there are no walls relative to the Polynomial $P(m, n) = 3m + 2n$.
As in the proof of Proposition \ref{walls}, we attempt to solve the equation
\[
\frac{\alpha + t}{r + s} = \frac{\alpha}{5}
\]
with $0 \le r \le 3$, $0 \le s \le 2$, $1 \le r + s \le 4$, $t \ge r + s - rs$.
For all choices of $r$ and $s$ we have $t \ge 0$, hence the above equation has no positive solutions in $\alpha$.
From Proposition \ref{M_infinity} we see that $\M^5(3m + 2n)$ isomorphic to the universal quintic of degree $(2, 3)$,
so it is a $\PP^{10}$-bundle over $\PP^1 \times \PP^1$.
More precisely, the elements in $\M^5(3m + 2n)$ are of the form $(\H^0(\O_Q(p)), \O_Q(p))$,
where $\O_Q(p)$ is a non-split extension of $\CC_p$ by $\O_Q$.
\end{remark}

\begin{proposition}
\label{M_3_closure}
Let $Q \subset \PP^1 \times \PP^1$ be a quintic curve of degree $(2, 3)$, let $p \in Q$ be a point, let $\O_Q(p)$ be a non-split
extension of $\CC_p$ by $\O_Q$, and let $L \subset \PP^1 \times \PP^1$ be a line of degree $(0, 1)$.
Then any non-split extension sheaf $\F$ as in (\ref{Q_p_F_L}) is semi-stable.
The set of such sheaves is the closure of $\MM_3$ in $\MM$.
The boundary $\overline{\MM}_3 \setminus \MM_3$ is contained in $\MM_2$, more precisely,
it consists of extensions as in (\ref{C_F_p}) in which $C = Q \cup L$ and $p \in Q$.
\end{proposition}

\begin{proof}
The case when $\H^0(\F) \simeq \CC$ was examined at Proposition \ref{M_3_4}, 
so we need only consider the case when $\H^0(\F) \simeq \CC^2$.
In this case the canonical morphism $\O \to \O_L$ lifts to a morphism $\O \to \F$, hence we can combine resolution (\ref{Q_p_resolution})
with the standard resolution of $\O_L$ to obtain the resolution
\[
0 \lra \O(-2, -2) \oplus \O(-1, -3) \oplus \O(0, -1) \overset{\f}{\lra} \O(-1, -2) \oplus \O \oplus \O \lra \F \lra 0,
\]
\[
\f = \left[
\begin{array}{ccc}
\f_{11} & \f_{12} & 0 \\
\f_{21} & \f_{22} & \f_{23} \\
0 & 0 & \f_{33}
\end{array}
\right],
\]
where $\f_{11} \neq 0$, $\f_{12} \neq 0$, and $\f_{23}$ and $\f_{33}$ are linearly independent.
Note that $p$ is given by the equations $\f_{11} = 0$, $\f_{12} = 0$.
From the snake lemma, we obtain an extension
\[
0 \lra \F' \lra \F \lra \CC_p \lra 0,
\]
where $\F'$ is given by the resolution
\[
0 \lra \O(-2, -3) \oplus \O(0, -1) \overset{\f'}{\lra} 2\O \lra \F' \lra 0,
\]
\[
\f' = \left[
\begin{array}{cc}
\f'_{11} & \f_{23} \\
0 & \f_{33}
\end{array}
\right], \qquad \f'_{11} = \f_{11} \f_{22} - \f_{12} \f_{21}.
\]
We claim that $\F' \simeq \O_C(0, 1)$, where $C = Q \cup L$.
In view of Proposition \ref{M_2}, the claim implies that $\F$ is semi-stable, in fact $[\F] \in \MM_2$.
It remains to prove the claim. Let ${\mathcal K}$ be the kernel of the canonical morphism $\O_C \to \O_Q$.
Since ${\mathcal K}$ has no zero-dimensional torsion and $P_{\mathcal K} = m - 1$, ${\mathcal K} \simeq \O_L( -2, 0)$.
Applying ${\mathcal Hom}(-, \omega)$ to the exact sequence
\[
0 \lra \O_L(-2, 0) \lra \O_C(0, 1) \lra \O_Q(0, 1) \lra 0,
\]
yields the exact sequence
\[
0 \lra {\mathcal Ext}^1(\O_Q(0, 1), \omega) \lra {\mathcal Ext}^1(\O_C(0, 1), \omega) \lra {\mathcal Ext}^1(\O_L(-2, 0), \omega) \lra 0,
\]
which is the same as the exact sequence
\[
0 \lra \O_Q \lra \O_C(0, 1) \lra \O_L \lra 0.
\]
Since $\H^0(\O_C(0, 1)) \simeq \CC^2$, the canonical morphism $\O \to \O_L$ lifts to a morphism $\O \to \O_C(0, 1)$,
hence the canonical resolutions of $\O_Q$ and $\O_L$ can be combined into a resolution of the form
\[
0 \lra \O(-2, -3) \oplus \O(0, -1) \overset{\psi}{\lra} 2\O \lra \O_C(0, 1) \lra 0,
\]
\[
\psi = \left[
\begin{array}{cc}
\f'_{11} & \psi_{12} \\
0 & \f_{33}
\end{array}
\right].
\]
Since $\O_C(0, 1)$ is a non-split extension of $\O_L$ by $\O_Q$, $\psi_{12}$ and $\f_{33}$ are linearly independent.
It is clear now that the matrices representing $\f'$ and $\psi$ are equivalent under elementary row and column operations.
We conclude that $\F' \simeq \O_C(0, 1)$.
\end{proof}

\noindent
The preimages of the sets of strictly semi-stable elements are the flipping loci:
\begin{align*}
F^{\infty} & = \rho_{\infty}^{-1}(\M^{11}(3m + 2n - 1) \times \M(m+ 2)) \subset \MM^{\infty}, \\
F^{11} & = \rho_{11}^{-1}(\M^{11}(3m + 2n - 1) \times \M(m+ 2)) \subset \MM^{11-}, \\
F^5 & = \rho_5^{-1}(\M^5(3m + 2n) \times \M(m + 1)) \subset \MM^{5+}, \\
F^0 & = \rho_0^{-1}(\M^5(3m + 2n) \times \M(m + 1)) \subset \MM^{0+}.
\end{align*}

\begin{proposition}
\label{flipping_loci}
Consider $\Lambda_1 \in \M^{11}(3m + 2n - 1)$, $\Lambda_2 \in \M(m + 2)$, $\Lambda_3 \in \M^5(3m + 2n)$,
and $\Lambda_4 \in \M(m + 1)$.
\begin{enumerate}
\item[(i)] Over a point $(\Lambda_1, \Lambda_2)$, $F^{\infty}$ has fiber $\PP(\Ext^1(\Lambda_1, \Lambda_2))$.
\item[(ii)] Over a point $(\Lambda_1, \Lambda_2)$, $F^{11}$ has fiber $\PP(\Ext^1(\Lambda_2, \Lambda_1))$.
\item[(iii)] Over a point $(\Lambda_3, \Lambda_4)$, $F^5$ has fiber $\PP(\Ext^1(\Lambda_3, \Lambda_4))$.
\item[(iv)] Over a point $(\Lambda_3, \Lambda_4)$, $F^0$ has fiber $\PP(\Ext^1(\Lambda_4, \Lambda_3))$.
\end{enumerate}
\end{proposition}

\begin{proof}
(i) We refer to the argument at \cite[Remark 2]{genus_two}.

\medskip

\noindent
(ii) Assume that $\Lambda = (\Gamma, \F) \in F^{11}$ lies over $(\Lambda_1, \Lambda_2)$.
Then $\Lambda$ is a non-split extension of $\Lambda_1$ by $\Lambda_2$, or, vice versa, of $\Lambda_2$ by $\Lambda_1$.
If $\Lambda_2 \subset \Lambda$, then
\[
\p_{\alpha}(\Lambda_2) = 2 > \frac{\alpha + 1}{6} = \p_{\alpha}(\Lambda) \quad \text{for $\alpha \in (5, 11)$},
\]
which violates the semi-stability of $\Lambda$. Thus $\Lambda \in \PP(\Ext^1(\Lambda_2, \Lambda_1))$.
Conversely, given such $\Lambda$, we need to show that $\Lambda \in \MM^{\alpha}$ for $\alpha \in (5, 11)$.
Write $\Lambda_1 = (\Gamma_1, \O_Q)$, $\Lambda_2 = (0, \O_L(1, 0))$.
We have a non-split extension of sheaves
\[
0 \lra \O_Q \lra \F \lra \O_L(1, 0) \lra 0.
\]
Let $\Lambda' = (\Gamma', \F')$ be a proper coherent subsystem of $\Lambda$.
Let $\G$ be the image of $\F'$ in $\O_L(1, 0)$.
If $\F' \cap \O_Q = \{ 0 \}$, then $\G \neq \O_L(1, 0)$, forcing $\chi(\F') = \chi(\G) \le 1$.
If $\F' \cap \O_Q \neq \{ 0 \}$, then $\chi(\F' \cap \O_Q) \le -1$ because, by virtue of \cite[Lemma 9]{ballico_huh}, $\O_Q$ is semi-stable.
We have in this case $\chi(\F') = \chi(\F' \cap \O_Q) + \chi(\G) \le -1 + 2 = 1$.
If $\Gamma' = \{ 0 \}$, then
\[
\p_{\alpha}(\Lambda') = \p(\F') \le 1 < \frac{\alpha + 1}{6} = \p_{\alpha}(\Lambda) \quad \text{for $\alpha \in (5, 11)$}.
\]
Assume now that $\Gamma' \neq \{ 0 \}$. Then $\Gamma' = \Gamma = \H^0(\O_Q)$, hence $\O_Q \subset \F'$.
If $\O_Q = \F'$, then
\[
\p_{\alpha}(\Lambda') = \frac{\alpha - 1}{5} < \frac{\alpha + 1}{6} = \p_{\alpha}(\Lambda) \quad \text{for $\alpha \in (5, 11)$}.
\]
If $\O_Q \subsetneqq \F'$, then $r(\F') + s(\F') = 6$, hence $\chi(\F') \le 0$, and hence
\[
\p_{\alpha}(\Lambda') = \frac{\alpha + \chi(\F')}{6} \le \frac{\alpha}{6} < \frac{\alpha + 1}{6} = \p_{\alpha}(\Lambda).
\]
In all cases we have the inequality $\p_{\alpha}(\Lambda') < \p_{\alpha}(\Lambda)$, hence $\Lambda \in \MM^{\alpha}$, for $\alpha \in (5, 11)$.

\medskip

\noindent
(iii) We will show that every $\Lambda = (\Gamma, \F) \in \PP(\Ext^1(\Lambda_3, \Lambda_4))$ gives a point in $\MM^{\alpha}$
for $\alpha \in (5, 11)$. Write $\Lambda_3 = (\Gamma_3, \O_Q(p))$, $\Lambda_4 = (0, \O_L)$.
We have a, possibly split, extension of sheaves
\[
0 \lra \O_L \lra \F \lra \O_Q(p) \lra 0.
\]
Let $\Lambda' = (\Gamma', \F')$ be a proper coherent subsystem of $\Lambda$.
Let $\G$ be the image of $\F'$ in $\O_Q(p)$. Using the fact that $\O_Q$ is semi-stable, it is easy to see that $\O_Q(p)$ is semi-stable, as well.
Thus, $\chi(\G) \le 0$, hence $\chi(\F') = \chi(\F' \cap \O_L) + \chi(\G) \le 1 + 0 = 1$.
If $\Gamma' = \{ 0 \}$, then
\[
\p_{\alpha}(\Lambda') = \p(\F') \le 1 < \frac{\alpha + 1}{6} = \p_{\alpha}(\Lambda) \quad \text{for $\alpha \in (5, 11)$}.
\]
Assume now that $\Gamma' \neq \{ 0 \}$, i.e. $\Gamma' = \Gamma$. Then $\O_Q \subset \G$.
If $\F' \cap \O_L = \{ 0 \}$, then $\F' \ncong \O_Q(p)$, otherwise $\Lambda \simeq \Lambda_3 \oplus \Lambda_4$.
In this case $\F' \simeq \O_Q$, hence
\[
\p_{\alpha}(\Lambda') = \frac{\alpha - 1}{5} < \frac{\alpha + 1}{6} = \p_{\alpha}(\Lambda) \quad \text{for $\alpha \in (5, 11)$}.
\]
Assume now that $\F' \cap \O_L \neq \{ 0 \}$. Then $r(\F') + s(\F') = 6$, hence $\chi(\F') \le 0$, and hence
$\p_{\alpha}(\Lambda') < \p_{\alpha}(\Lambda)$.

\medskip

\noindent
(iv) If $(\Gamma, \F) \in \PP(\Ext^1(\Lambda_4, \Lambda_3))$, then we have the non-split extension (\ref{Q_p_F_L}),
hence, by Proposition \ref{M_3_closure}, $\F$ is semi-stable.
Thus $(\Gamma, \F) \in \MM^{0+}$, i.e. $(\Gamma, \F) \in F^0$.
\end{proof}

\begin{proposition}
\label{ext_sequence}
\emph{(\cite[Corollaire 1.6]{he})}
Let $\Lambda = (\Gamma, \F)$ and $\Lambda' = (\Gamma', \F')$ be two coherent systems on a separated scheme of finite type over $\CC$.
Then there is a long exact sequence
\begin{align*}
0 & \lra \Hom(\Lambda, \Lambda') \lra \Hom(\F, \F') \lra \Hom(\Gamma, \H^0(\F')/\Gamma') \\
& \lra \Ext^1(\Lambda, \Lambda') \lra \Ext^1(\F, \F') \lra \Hom(\Gamma, \H^1(\F')) \\
& \lra \Ext^2(\Lambda, \Lambda') \lra \Ext^2(\F, \F') \lra \Hom(\Gamma, \H^2(\F')).
\end{align*}
\end{proposition}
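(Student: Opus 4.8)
This long exact sequence is \cite[Corollaire 1.6]{he}; for the reader's convenience I sketch how one reconstructs it in the case of $\PP^1 \times \PP^1$, which is all that is used here. The plan is to present a coherent system $(\Gamma, \F)$ as the datum of a morphism of sheaves $\Gamma \tensor \O \lra \F$, via the adjunction $\Hom(\Gamma \tensor \O, \F) \simeq \Hom_{\CC}(\Gamma, \H^0(\F))$ together with $\H^0(\O) = \CC$. In this way the abelian category of coherent systems is identified with the full subcategory of the category $\operatorname{Mor}$ of morphisms of coherent sheaves on $\PP^1 \times \PP^1$ consisting of the arrows whose source is a trivial bundle $\Gamma \tensor \O$. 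Since $\H^1(\O) = 0$, an iterated extension of such arrows again has trivial source, so this subcategory is closed under extensions and all of its $\Ext$ groups coincide with those computed in $\operatorname{Mor}$.

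Next I would use the standard mapping cone description of $\Ext$ in an arrow category: for objects $u \colon A \to B$ and $u' \colon A' \to B'$ of $\operatorname{Mor}$ there is a distinguished triangle
\[
\operatorname{RHom}_{\operatorname{Mor}}(u, u') \lra \operatorname{RHom}(A, A') \oplus \operatorname{RHom}(B, B') \stackrel{d}{\lra} \operatorname{RHom}(A, B') \lra \operatorname{RHom}_{\operatorname{Mor}}(u, u')[1],
\]
with $d(a, b) = b \circ u - u' \circ a$; in degree $0$ the kernel of $d$ is exactly the set of commuting squares, i.e.\ the morphisms $u \to u'$ in $\operatorname{Mor}$, and the triangle then follows formally by comparing projective presentations. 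Specializing to $A = \Gamma \tensor \O$, $B = \F$, $A' = \Gamma' \tensor \O$, $B' = \F'$ and taking cohomology, the outer terms simplify: $\Ext^i(\Gamma \tensor \O, \F') \simeq \Hom_{\CC}(\Gamma, \H^i(\F'))$ because $\Gamma \tensor \O$ is locally free, while $\Ext^i(\Gamma \tensor \O, \Gamma' \tensor \O) \simeq \Hom_{\CC}(\Gamma, \Gamma') \tensor \H^i(\O)$ vanishes for $i \ge 1$ on $\PP^1 \times \PP^1$. The long exact cohomology sequence of the triangle thus reads
\[
0 \to \Hom(\Lambda, \Lambda') \to \Hom(\F, \F') \oplus \Hom_{\CC}(\Gamma, \Gamma') \stackrel{d_0}{\to} \Hom_{\CC}(\Gamma, \H^0(\F')) \to \Ext^1(\Lambda, \Lambda') \to \Ext^1(\F, \F') \to \Hom_{\CC}(\Gamma, \H^1(\F')) \to \Ext^2(\Lambda, \Lambda') \to \Ext^2(\F, \F') \to \Hom_{\CC}(\Gamma, \H^2(\F')) \to \cdots .
\]

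Finally I would collapse the spurious summand $\Hom_{\CC}(\Gamma, \Gamma')$. Because $\Gamma' \hookrightarrow \H^0(\F')$, the induced map $\Hom_{\CC}(\Gamma, \Gamma') \to \Hom_{\CC}(\Gamma, \H^0(\F'))$ is injective with cokernel $\Hom_{\CC}(\Gamma, \H^0(\F')/\Gamma')$, and its image lies inside the image of $d_0$, hence inside the kernel of the connecting map to $\Ext^1(\Lambda, \Lambda')$. A short diagram chase then rewrites the beginning of the sequence as
\[
0 \to \Hom(\Lambda, \Lambda') \to \Hom(\F, \F') \to \Hom_{\CC}(\Gamma, \H^0(\F')/\Gamma') \to \Ext^1(\Lambda, \Lambda') \to \Ext^1(\F, \F') \to \cdots ,
\]
where $\Hom(\F, \F') \to \Hom_{\CC}(\Gamma, \H^0(\F')/\Gamma')$ sends $\psi$ to $\gamma \mapsto \H^0(\psi)(\gamma) \bmod \Gamma'$, the later terms being untouched; truncating at $\Ext^2(\F, \F') \to \Hom_{\CC}(\Gamma, \H^2(\F'))$ gives the asserted sequence. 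The one genuinely delicate point is this last amalgamation step, namely verifying exactness after merging the two $\Hom$ summands into $\Hom_{\CC}(\Gamma, \H^0(\F')/\Gamma')$; the remainder is formal homological algebra.
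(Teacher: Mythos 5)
The paper offers no proof of this statement: it is quoted verbatim from He \cite[Corollaire 1.6]{he}, so your opening citation already coincides with the paper's entire justification, and your sketch is supplementary rather than a competing argument. The sketch follows the standard route (encode a pair as an arrow $\Gamma \tensor \O \to \F$, use the mapping-cone triangle for $\operatorname{RHom}$ between arrows, then absorb $\Hom_{\CC}(\Gamma, \Gamma')$ into $\Hom_{\CC}(\Gamma, \H^0(\F'))$ via $\Gamma' \hookrightarrow \H^0(\F')$), and the final amalgamation step you single out as delicate is indeed only a routine diagram chase, which works as you describe. The one genuinely overstated point is earlier: from the fact that the full subcategory of arrows with trivialized source is closed under extensions you conclude that \emph{all} of its $\Ext$ groups agree with those computed in the ambient arrow category. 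Extension-closure identifies Yoneda $\Ext^1$ only; it gives no control over $\Ext^2$, and the $\Ext^2$ terms are exactly the part of the sequence the paper later needs (the smoothness statements rest on $\Ext^2(\Lambda, \Lambda) = 0$ in the sense of coherent systems, i.e.\ He's $\Ext$). The clean repair is to run the same mapping-cone triangle directly in the abelian comma category of all pairs $(\Gamma, \F, \Gamma \tensor \O \to \F)$, which is (up to equivalence) where He computes these groups: there the middle term is $\operatorname{RHom}(\F, \F') \oplus \Hom_{\CC}(\Gamma, \Gamma')$ with the vector-space factor contributing only in degree $0$, since finite-dimensional vector spaces have no higher $\Ext$. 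This both closes the $\Ext^{\ge 2}$ gap and removes your reliance on $\H^i(\O) = 0$, so one recovers the statement for an arbitrary separated scheme of finite type, as He states it, rather than only for $\PP^1 \times \PP^1$.
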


\begin{proposition}
\label{flipping_bundles}
The flipping loci $F^{\infty}$, $F^{11}$, $F^5$, $F^0$ are smooth bundles with fibers $\PP^3$, $\PP^1$, $\PP^2$, respectively, $\PP^1$.
\end{proposition}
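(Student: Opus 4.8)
The plan is to realise each of $F^{\infty}$, $F^{11}$, $F^5$, $F^0$ as a projective bundle over a smooth base; smoothness of the total space is then automatic, and identifying the fibre as $\PP^3$, $\PP^1$, $\PP^2$, $\PP^1$ comes down to showing that a single $\Ext^1$-group of coherent systems has constant dimension $4$, $2$, $3$, $2$ respectively.

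First I would check that the bases are smooth. By Remark \ref{flipping_base}, $\M^{0+}(3m+2n-1)\simeq\PP^{11}$, while $\M(m+2)$ and $\M(m+1)$ are both $\simeq\PP^1$: the only semi-stable sheaves with Hilbert polynomial $m+2$, resp.\ $m+1$, are $\O_L(1,0)$, resp.\ $\O_L$, for lines $L$ of bidegree $(0,1)$, and such lines form a $\PP^1$. So the base of $F^{\infty}$ and $F^{11}$ is $\PP^{11}\times\PP^1$, and by Proposition \ref{universal_quintic} the base of $F^5$ and $F^0$ is the universal quintic of bidegree $(2,3)$ times $\PP^1$, again smooth. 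Over each base there are tautological families of sheaves (the universal quintic, the universal line with $\O(1,0)$, the universal extension of Proposition \ref{universal_quintic}), hence relative Ext-sheaves, and it suffices to show that $\Ext^1(\Lambda_1,\Lambda_2)$, $\Ext^1(\Lambda_2,\Lambda_1)$, $\Ext^1(\Lambda_3,\Lambda_4)$, $\Ext^1(\Lambda_4,\Lambda_3)$ have constant dimensions $4$, $2$, $3$, $2$; then the relative $\mathcal{E}xt^1$-sheaf is locally free of that rank by cohomology and base change, the flipping locus is its projectivisation, and its smoothness follows from that of the base.

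For each relevant ordered pair $(\Lambda,\Lambda')$ I would run Proposition \ref{ext_sequence}. When the source is $\Lambda_2=(0,\O_L(1,0))$ or $\Lambda_4=(0,\O_L)$, the coherent-system $\Ext^i$ coincides with the sheaf $\Ext^i$; when the source is $\Lambda_1=(\H^0(\O_Q),\O_Q)$ or $\Lambda_3=(\H^0(\E),\E)$, Proposition \ref{ext_sequence} inserts only a $\Hom(\Gamma,\H^0(\F'))$ term, the term $\Hom(\Gamma,\H^1(\F'))$ vanishing since $\H^1(\O_L(1,0))=\H^1(\O_L)=0$. Using the standard resolution of $\O_Q$, resolution (\ref{M^5_resolution}) for $\E$, and Serre duality on $\PP^1\times\PP^1$, I would compute $\chi(\Lambda,\Lambda')=\sum_i(-1)^i\dim\Ext^i(\Lambda,\Lambda')$ to be $-4$, $-2$, $-3$, $-2$ in the four cases. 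Since $\PP^1\times\PP^1$ is a surface, $\Ext^i$ vanishes for $i\ge 3$, so the claim reduces to $\Hom(\Lambda,\Lambda')=0$ and $\Ext^2(\Lambda,\Lambda')=0$. For $\Hom$: if the source is $\Lambda_1$ or $\Lambda_3$, a morphism of coherent systems must kill the generating global section of $\O_Q$, hence kill the induced sheaf map on $\O_Q$, reducing us to $\Hom(\CC_p,\O_L)=0$ (the target is torsion-free); if the source is $\Lambda_2$ or $\Lambda_4$, a nonzero sheaf map would produce inside $\O_Q$, resp.\ $\E$, a subsheaf isomorphic to a line bundle on a line of bidegree $(0,1)$, which is impossible because $\O_Q$ is Cohen--Macaulay (hence has no zero-dimensional subsheaves) and stable, resp.\ $\E$ is semi-stable, whereas such a line bundle has strictly larger reduced Hilbert polynomial. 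For $\Ext^2$: Serre duality gives $\Ext^2(\F,\F')\simeq\Hom(\F',\F(-2,-2))^*$, and together with the pair sequence (and $\H^2(\F')=0$) the vanishing follows from the same purity-and-slope arguments; in two cases it is even simpler, since $\Hom(\O_Q,\O_L(-1,0))=0$ just because $\O_Q$ is generated by one section and $\H^0(\PP^1,\O(-1))=0$. Substituting $\Hom=\Ext^2=0$ into the Euler characteristics yields $\dim\Ext^1=4,2,3,2$, i.e.\ fibres $\PP^3$, $\PP^1$, $\PP^2$, $\PP^1$.

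The main obstacle I anticipate is making the vanishings uniform over the whole base, i.e.\ handling the degenerate loci where $L\subset Q$, where $p\in L$, or where $Q$ is non-reduced. The $\Ext^2$-vanishings are robust, using only purity of $\O_Q$ and $\O_Q(-2,-2)$ (automatic for effective divisors in a smooth surface) together with inequalities of reduced Hilbert polynomials; likewise the $\Hom$-vanishings with source $\Lambda_2$ or $\Lambda_4$. The delicate point is $\Hom(\Lambda_1,\Lambda_2)=0$ and $\Hom(\Lambda_3,\Lambda_4)=0$: the underlying sheaf $\Hom$ $\Hom(\O_Q,\O_L(1,0))$ does jump exactly when $L\subset Q$, and one must use precisely the coherent-system structure — that a morphism respects the chosen one-dimensional subspace $\Gamma_1=\H^0(\O_Q)$, resp.\ $\Gamma_3=\H^0(\E)$ — to force it to vanish there. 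Once $\dim\Ext^1$ is shown constant, the conclusion that $F^{\infty}$, $F^{11}$, $F^5$, $F^0$ are smooth $\PP^3$-, $\PP^1$-, $\PP^2$-, $\PP^1$-bundles is formal.
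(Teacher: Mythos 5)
Your proposal is sound and reaches the right dimensions, but it is organized differently from the paper's proof, and the difference is worth noting. The paper never passes through Euler characteristics: for each ordered pair it splices the long exact sequence of Proposition \ref{ext_sequence} directly with the sheaf-level sequence coming from the resolution of $\O_Q$ (respectively, from (\ref{M^5_resolution})). Writing $h=\dim\Hom(\O_Q,\O_L(1,0))$, one sequence gives $\dim\Ext^1(\O_Q,\O_L(1,0))=2+h$ and the other gives $\dim\Ext^1(\Lambda_1,\Lambda_2)=(2-h)+(2+h)=4$, so the jumping term $h$ cancels automatically; the only input needed is $\Hom(\Lambda_1,\Lambda_2)=0$ (stability of the two pairs), and similarly for $(\Lambda_3,\Lambda_4)$, while for the sources $\Lambda_2,\Lambda_4$ (with $\Gamma=0$) the pair $\Ext^1$ is just the sheaf $\Ext^1$, computed by Serre duality and the same resolutions. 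In particular the degenerate loci ($L\subset Q$, $p\in L$) that you single out as the main obstacle require no separate treatment in the paper's bookkeeping, and no $\Ext^2$-vanishing is needed for this proposition at all: since $\H^1(\O_L(1,0))=\H^1(\O_L)=0$, the pair sequence truncates at $\Ext^1$. Your route instead needs $\Hom(\Lambda,\Lambda')=0$, $\Ext^2(\Lambda,\Lambda')=0$, and also the vanishing of $\Ext^3$ of \emph{pairs}, which does not follow merely from $\PP^1\times\PP^1$ being a surface (the category of coherent systems has homological dimension one higher); it does follow here because $\H^2(\F')=0$ and $\Ext^3(\F,\F')=0$, but you should say so. The $\Ext^2$-vanishings you invoke are true and are in fact proved by the paper, by the same Serre-duality-plus-slope arguments you sketch, but only later (Lemma \ref{ext^2}), where they are needed for smoothness of $\MM^{11-}$ and $\MM^{0+}$ along the flipping loci in the blow-up Theorem \ref{wall_crossing_1}; so your argument is not wasteful globally, it simply front-loads that material, whereas the paper's direct splicing is leaner for the proposition itself. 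Your Euler characteristic values $-4,-2,-3,-2$ and the identification of the bases and of the bundle structure via relative $\Ext$-sheaves and cohomology and base change are all correct and consistent with what the paper takes for granted in the paragraph preceding the proposition.
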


\begin{proof}
We need to determine the extension spaces of pairs occurring at Proposition \ref{flipping_loci}.

\medskip

\noindent
(i) Choose $\Lambda_1 = (\Gamma_1, \O_Q)$ and $\Lambda_2 = (0, \O_L(1, 0))$.
From Proposition \ref{ext_sequence} we have the long exact sequence
\begin{align*}
0 & \lra \Hom(\Lambda_1, \Lambda_2) \lra \Hom(\O_Q, \O_L(1, 0)) \lra \Hom(\Gamma_1, \H^0(\O_L(1, 0))) \simeq \CC^2 \\
& \lra \Ext^1(\Lambda_1, \Lambda_2) \lra \Ext^1(\O_Q, \O_L(1, 0)) \lra \Hom(\Gamma_1, \H^1(\O_L(1, 0))) = \{ 0 \}.
\end{align*}
For $\alpha \gg 0$, $\Lambda_1$ and $\Lambda_2$ are $\alpha$-stable coherent systems of different slopes, hence
$\Hom(\Lambda_1, \Lambda_2) = \{ 0 \}$.
From the short exact sequence
\begin{equation}
\label{Q_resolution}
0 \lra \O(-2, -3) \lra \O \lra \O_Q \lra 0,
\end{equation}
we obtain the long exact sequence
\begin{align*}
0 & \lra \Hom(\O_Q, \O_L(1, 0)) \lra \H^0(\O_L(1, 0)) \simeq \CC^2 \lra \H^0(\O_L(3, 3)) \simeq \CC^4 \\
& \lra \Ext^1(\O_Q, \O_L(1, 0)) \lra \H^1(\O_L(1, 0)) = \{ 0 \}.
\end{align*}
Combining the last two long exact sequences, we obtain the isomorphism $\Ext^1(\Lambda_1, \Lambda_2) \simeq \CC^4$.

\medskip

\noindent
(ii) From Proposition \ref{ext_sequence}, we have the exact sequence
\begin{align*}
\{ 0 \} = & \Hom(0, \H^0(\O_Q)/\Gamma_1) \lra \Ext^1(\Lambda_2, \Lambda_1) \lra \\
& \Ext^1(\O_L(1, 0), \O_Q) \simeq \Ext^1(\O_Q, \O_L(-1, -2))^* \lra \Hom(0, \H^1(\O_Q)) = \{ 0 \}.
\end{align*}
From resolution (\ref{Q_resolution}), we obtain the exact sequence
\[
\{ 0 \} = \H^0(\O_L(-1, -2)) \lra \H^0(\O_L(1, 1)) \simeq \CC^2 \lra \Ext^1(\O_Q, \O_L(-1, -2)) \lra \H^1(\O_L(-1, -2)) = \{ 0 \}.
\]
Combining the last two exact sequences, we obtain the isomorphism $\Ext^1(\Lambda_2, \Lambda_1) \simeq \CC^2$.

\medskip

\noindent
(iii) Choose $\Lambda_3 = (\Gamma, \O_Q(p))$ and $\Lambda_4 = (0, \O_L)$.
From Proposition \ref{ext_sequence}, we have the long exact sequence
\begin{align*}
\{ 0 \} = & \Hom(\Lambda_3, \Lambda_4) \lra \Hom(\O_Q(p), \O_L) \lra \Hom(\Gamma, \H^0(\O_L)) \simeq \CC \lra \\
& \Ext^1(\Lambda_3, \Lambda_4) \lra \Ext^1(\O_Q(p), \O_L) \lra \Hom(\Gamma, \H^1(\O_L)) = \{ 0 \}.
\end{align*}
From resolution (\ref{Q_p_resolution}) we obtain the exact sequence
\begin{align*}
0 & \lra \Hom(\O_Q(p), \O_L) \lra \H^0(\O_L(1, 2) \oplus \O_L) \simeq \CC^3 \lra \H^0(\O_L(2, 2) \oplus \O_L(1, 3)) \simeq \CC^5 \\
& \lra \Ext^1(\O_Q(p), \O_L) \lra \H^1(\O_L(1, 2) \oplus \O_L) = \{ 0 \}.
\end{align*}
Combining the last two exact sequences, it follows that $\Ext^1(\Lambda_3, \Lambda_4) \simeq \CC^3$.

\medskip

\noindent
(iv) From Proposition \ref{ext_sequence}, we obtain the exact sequence
\begin{align*}
\{ 0 \} = & \Hom(0, \H^0(\O_Q(p))/\Gamma) \lra \Ext^1(\Lambda_4, \Lambda_3) \lra \\
& \Ext^1(\O_L, \O_Q(p)) \simeq \Ext^1(\O_Q(p), \O_L(-2, -2))^* \lra \Hom(0, \H^1(\O_Q(p))) = \{ 0 \}.
\end{align*}
From resolution (\ref{Q_p_resolution}) we obtain the exact sequence
\begin{align*}
\{ 0 \} = & \H^0(\O_L(-1, 0) \oplus \O_L(-2, -2)) \lra \H^0(\O_L \oplus \O_L(-1, 1)) \simeq \CC \lra \Ext^1(\O_Q(p), \O_L(-2, -2)) \\
\lra & \H^1(\O_L(-1, 0) \oplus \O_L(-2, -2)) \simeq \CC \lra \H^1(\O_L \oplus \O_L(-1, 1)) = \{ 0 \}.
\end{align*}
Combining the last two exact sequences, it follows that $\Ext^1(\Lambda_4, \Lambda_3) \simeq \CC^2$.
\end{proof}

\begin{lemma}
\label{ext^2}
\emph{(i)} For $\Lambda \in F^{11}$ we have $\Ext^2(\Lambda, \Lambda) = \{ 0 \}$. \\
\emph{(ii)} For $\Lambda \in F^0$ we have $\Ext^2(\Lambda, \Lambda) = \{ 0 \}$.
\end{lemma}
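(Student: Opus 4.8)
The plan is to compute $\Ext^2(\Lambda, \Lambda)$ for a strictly $\alpha$-semi-stable pair $\Lambda$ on a flipping wall by using the long exact sequence of Proposition \ref{ext_sequence} with $\Lambda = \Lambda'$, together with the extension structure of the underlying sheaf $\F$ coming from the flip. For part (i), a point $\Lambda \in F^{11}$ lies over $(\Lambda_1, \Lambda_2) \in \M^{0+}(3m+2n-1) \times \M(m+2)$, and $\F$ fits into $0 \to \O_L(1,0) \to \F \to \O_Q \to 0$ (the $F^{11}$-direction of the flip), where $\Gamma$ comes from $\H^0(\O_Q)$. The sequence of Proposition \ref{ext_sequence} gives
\[
\Ext^2(\F, \F) \lra \Ext^2(\Lambda, \Lambda) \lra 0 \quad\text{(with a correction from } \Hom(\Gamma, \H^1(\F))\text{)},
\]
so it suffices to show $\Ext^2(\F, \F) = 0$ and that $\H^1(\F) = 0$. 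The latter follows from the extension sequence since $\H^1(\O_L(1,0)) = 0$ and $\H^1(\O_Q) = 0$. For the former, I would break $\Ext^2(\F,\F)$ along both the sub and the quotient: using Serre duality $\Ext^2(\F, \G) \simeq \Hom(\G, \F(-2,-2))^*$, reduce to showing $\Hom(\O_Q, \F(-2,-2)) = 0$ and $\Hom(\O_L(1,0), \F(-2,-2)) = 0$. Each of these reduces in turn, via the extension $0 \to \O_L(-1,-2) \to \F(-2,-2) \to \O_Q(-2,-2) \to 0$, to vanishing of $\Hom$ groups between $\O_Q$, $\O_L(1,0)$ and twists of $\O_Q$, $\O_L$; these vanish because $Q$ has bidegree $(2,3)$ and $L$ has bidegree $(0,1)$, so the relevant restricted line bundles have negative degree or the slopes are incompatible with a nonzero map of pure sheaves. (One uses here that $\O_Q$ and $\O_L(1,0)$ are stable, by \cite[Proposition 3.2]{genus_two}.)

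For part (ii), $\Lambda \in F^0$ lies over $(\Lambda_3, \Lambda_4) \in \M^{0+}(3m+2n) \times \M(m+1)$, and $\F$ sits in $0 \to \O_L \to \F \to \E \to 0$, where $\E$ is an extension of $\CC_p$ by $\O_Q$ as in Proposition \ref{universal_quintic}, and $\Gamma$ comes from $\H^0(\E)$. Again $\H^1(\F) = 0$ because $\H^1(\O_L) = 0$ and $\H^1(\E) = 0$ (the latter from the sequence $0 \to \O_Q \to \E \to \CC_p \to 0$), so the sequence of Proposition \ref{ext_sequence} reduces the claim to $\Ext^2(\F, \F) = 0$. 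By Serre duality this is $\Hom(\F, \F(-2,-2))^* = 0$, and I would filter the first $\F$ by $0 \to \O_L \to \F \to \E \to 0$ to reduce to $\Hom(\O_L, \F(-2,-2)) = 0$ and $\Hom(\E, \F(-2,-2)) = 0$, then filter the target $\F(-2,-2)$ by $0 \to \O_L(-2,-2) \to \F(-2,-2) \to \E(-2,-2) \to 0$. This produces four $\Hom$ groups: $\Hom(\O_L, \O_L(-2,-2))$, $\Hom(\O_L, \E(-2,-2))$, $\Hom(\E, \O_L(-2,-2))$, $\Hom(\E, \E(-2,-2))$. The first is $\H^0(\O_L(-2,-2)) = 0$. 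For the middle two I use the resolution (\ref{M^5_resolution}) of $\E$ and the sequence $0 \to \O_Q \to \E \to \CC_p \to 0$ together with $\Hom(\CC_p, -) = 0$ on a torsion-free-along-$C$ sheaf; each reduces to line-bundle cohomology on $L$ in negative degrees. For $\Hom(\E, \E(-2,-2))$ I use that $\E$ is semi-stable of slope $\p(\E) = 0$ while $\p(\E(-2,-2)) < 0$, so any nonzero map would destabilize; more carefully, restrict to $\Hom(\O_Q, \E(-2,-2))$ via the defining sequence of $\E$ and the vanishing $\Hom(\CC_p, \E(-2,-2)) = 0$, then use stability of $\O_Q$ and $\p(\E(-2,-2)) < \p(\O_Q) = 0$.

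The main obstacle I anticipate is the bookkeeping in part (ii): because $\E$ is itself an extension (and may be non-split), the group $\Hom(\E, \E(-2,-2))$ is not immediately zero by a one-line slope argument — one must be careful that the torsion quotient $\CC_p$ does not contribute, i.e. that $\E(-2,-2)$ has no zero-dimensional torsion and no section supported at $p$, which follows from $\E$ being pure of dimension one (an extension of $\CC_p$ by $\O_Q$ has no zero-dimensional torsion, as noted in Proposition \ref{universal_quintic}). Once that purity is in hand, every remaining group is either a negative-degree line bundle cohomology on $L$ or $\PP^1 \times \PP^1$, or is killed by the slope inequality against the stable sheaf $\O_Q$, and the lemma follows. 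A secondary point to check is that the correction term $\Hom(\Gamma, \H^0(\F)/\Gamma)$ appearing earlier in the sequence does not interfere: it feeds into $\Ext^1$, not $\Ext^2$, so it is irrelevant here, but I would remark on it to make the reduction to $\Ext^2(\F,\F)$ clean.
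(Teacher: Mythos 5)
There is a genuine gap, and it sits exactly at the point you describe as a ``correction term''. Applying Proposition \ref{ext_sequence} with $\Lambda' = \Lambda$, the relevant portion of the sequence is
$\Ext^1(\F, \F) \to \Hom(\Gamma, \H^1(\F)) \to \Ext^2(\Lambda, \Lambda) \to \Ext^2(\F, \F)$,
so besides $\Ext^2(\F, \F) = 0$ you must also kill $\Hom(\Gamma, \H^1(\F))$ or prove that the first map is surjective. You dispose of this by claiming $\H^1(\F) = 0$, justified by ``$\H^1(\O_Q) = 0$'' in (i) and ``$\H^1(\E) = 0$'' in (ii); both are false. The curve $Q$ has bidegree $(2, 3)$, hence arithmetic genus $2$, so $\chi(\O_Q) = -1$ and $\H^1(\O_Q) \simeq \CC^2$; likewise $\chi(\E) = 0$ with $h^0(\E) = 1$ gives $\H^1(\E) \simeq \CC$. (You also have the extensions oriented the wrong way: the fibre of $F^{11}$ is $\PP(\Ext^1(\Lambda_2, \Lambda_1))$, so the underlying sheaf sits in $0 \to \O_Q \to \F \to \O_L(1, 0) \to 0$, and on $F^0$ in $0 \to \E \to \F \to \O_L \to 0$; with your orientation $\H^1(\F) \simeq \H^1(\O_Q) \simeq \CC^2$, so the vanishing certainly fails, and with the correct orientation it is the cokernel of a connecting map $\CC^2 \to \CC^2$, which is not shown to vanish.) Hence $\Hom(\Gamma, \H^1(\F))$ is not zero for free and your reduction to $\Ext^2(\F, \F) = 0$ collapses.

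This nonvanishing $\H^1$ is precisely the nontrivial content of the lemma. The paper splits $\Lambda$ along $0 \to \Lambda_1 \to \Lambda \to \Lambda_2 \to 0$ (resp. $\Lambda_3$, $\Lambda_4$) and checks $\Ext^2(\Lambda_i, \Lambda_j) = 0$ term by term; the off-diagonal terms and $\Ext^2(\Lambda_2, \Lambda_2)$ die as you expect (Serre duality plus stability, or because the section space of the second factor is zero), but for $\Ext^2(\Lambda_1, \Lambda_1)$ the obstruction term $\Hom(\Gamma_1, \H^1(\O_Q)) \simeq \CC^2$ survives, and one must prove that $\Ext^1(\O_Q, \O_Q) \to \Hom(\Gamma_1, \H^1(\O_Q))$ is surjective. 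The paper does this by a dimension count: $\Ext^1(\Lambda_1, \Lambda_1) \simeq \CC^{11}$, being the tangent space of $\M^{0+}(3m + 2n - 1) \simeq \PP^{11}$, while $\Ext^1(\O_Q, \O_Q) \simeq \CC^{13}$, which forces the map onto $\CC^2$; similarly in (ii) one compares $\dim \Ext^1(\Lambda_3, \Lambda_3) = 12$ (tangent space of the smooth $12$-dimensional $\M^{0+}(3m + 2n)$) with $\dim \Ext^1(\E, \E) = 13$ to surject onto $\Hom(\Gamma, \H^1(\E)) \simeq \CC$. Your proposal contains no substitute for this step; the sheaf-level vanishing $\Ext^2(\F, \F) = 0$ that you do argue via Serre duality and slope estimates is fine, but it is only half of what is needed.
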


\begin{proof}
(i) In view of the exact sequence
\[
0 \lra \Lambda_1 \lra \Lambda \lra \Lambda_2 \lra 0
\]
it is enough to show that $\Ext^2(\Lambda_i, \Lambda_j) = \{ 0 \}$ for $i, j = 1, 2$.
From Proposition \ref{ext_sequence}, we have the exact sequence
\[
\{ 0 \} = \Hom(\Gamma_1, \H^1(\O_L(1, 0))) \lra \Ext^2(\Lambda_1, \Lambda_2)
\lra \Ext^2(\O_Q, \O_L(1, 0)) \simeq \Hom(\O_L(1, 0), \O_Q(-2, -2))^*.
\]
The group on the right vanishes because $\H^0(\O_Q(-3, -2)) = \{ 0 \}$.
Thus, $\Ext^2(\Lambda_1, \Lambda_2)$ $= \{ 0 \}$.
From the exact sequence
\[
\{ 0 \} = \Hom(0, \H^1(\O_Q)) \lra \Ext^2(\Lambda_2, \Lambda_1)
\lra \Ext^2(\O_L(1, 0), \O_Q) \simeq \Hom(\O_Q, \O_L(-1, -2))^* = \{ 0 \}
\]
we obtain the vanishing of $\Ext^2(\Lambda_2, \Lambda_1)$.
From the exact sequence
\begin{multline*}
\{ 0 \} = \Hom(0, \H^1(\O_L(1, 0))) \lra \Ext^2(\Lambda_2, \Lambda_2) \\
\lra \Ext^2(\O_L(1, 0), \O_L(1, 0)) \simeq \Hom(\O_L(1, 0), \O_L(-1, -2))^* = \{ 0 \}
\end{multline*}
we obtain the vanishing of $\Ext^2(\Lambda_2, \Lambda_2)$.
From Proposition \ref{ext_sequence}, we have the exact sequence
\begin{align*}
\{ 0 \} = \Hom(\Gamma_1, \H^0(\O_Q)/\Gamma_1)
& \lra \Ext^1(\Lambda_1, \Lambda_1) \lra \Ext^1(\O_Q, \O_Q) \lra \Hom(\Gamma_1, \H^1(\O_Q)) \simeq \CC^2 \\
& \lra \Ext^2(\Lambda_1, \Lambda_1) \lra \Ext^2(\O_Q, \O_Q) \simeq \Hom(\O_Q, \O_Q(-2, -2))^* = \{ 0 \}.
\end{align*}
According to \cite[Th\'eor\`eme 3.12]{he}, $\Ext^1(\Lambda_1, \Lambda_1)$ is isomorphic to the tangent space of $\M^{11}(3m + 2n -1) \simeq \PP^{11}$
(see Remark \ref{flipping_base}) at $\Lambda_1$, so it is isomorphic to $\CC^{11}$.
From resolution (\ref{Q_resolution}), we obtain the exact sequence
\begin{align*}
0 \lra & \Hom(\O_Q, \O_Q) \stackrel{\simeq}{\lra} \H^0(\O_Q) \lra \H^0(\O_Q(2, 3)) \simeq \CC^{11} \\
\lra & \Ext^1(\O_Q, \O_Q) \lra \H^1(\O_Q) \simeq \CC^2 \lra \H^1(\O_Q(2, 3)) = \{ 0 \}.
\end{align*}
Combining the last two exact sequences we obtain the vanishing of $\Ext^2(\Lambda_1, \Lambda_1)$.

\medskip

\noindent
(ii) As above, we need to prove that $\Ext^2(\Lambda_i, \Lambda_j) = \{ 0 \}$ for $i, j = 3, 4$.
From Proposition \ref{ext_sequence}, we have the exact sequence
\[
\{ 0 \} = \Hom(\Gamma, \H^1(\O_L)) \lra \Ext^2(\Lambda_3, \Lambda_4) \lra \Ext^2(\O_Q(p), \O_L) \simeq \Hom(\O_L, \O_Q(p)(-2, -2))^* = \{ 0 \}.
\]
Thus, $\Ext^2(\Lambda_3, \Lambda_4) = \{ 0 \}$.
From the exact sequence
\[
\{ 0 \} = \Hom(0, \H^1(\O_Q(p))) \lra \Ext^2(\Lambda_4, \Lambda_3) \lra \Ext^2(\O_L, \O_Q(p)) \simeq \Hom(\O_Q(p), \O_L(-2, -2))^* = \{ 0 \}
\]
we obtain the vanishing of $\Ext^2(\Lambda_4, \Lambda_3)$.
From the exact sequence
\[
\{ 0 \} = \Hom(0, \H^1(\O_L)) \lra \Ext^2(\Lambda_4, \Lambda_4) \lra \Ext^2(\O_L, \O_L) \simeq \Hom(\O_L, \O_L(-2, -2))^* = \{ 0 \}
\]
we obtain the vanishing of $\Ext^2(\Lambda_4, \Lambda_4)$.
From Proposition \ref{ext_sequence}, we have the exact sequence
\begin{align*}
\{ 0 \} =  & \Hom(\Gamma, \H^0(\O_Q(p))/\Gamma) \\
\lra & \Ext^1(\Lambda_3, \Lambda_3) \lra \Ext^1(\O_Q(p), \O_Q(p)) \lra \Hom(\Gamma, \H^1(\O_Q(p))) \simeq \CC \\
\lra & \Ext^2(\Lambda_3, \Lambda_3) \lra \Ext^2(\O_Q(p), \O_Q(p)) \simeq \Hom(\O_Q(p), \O_Q(p)(-2, -2))^* = \{ 0 \}.
\end{align*}
From resolution (\ref{Q_p_resolution}), we obtain the exact sequence
\begin{align*}
0 \lra & \Hom(\O_Q(p), \O_Q(p)) \lra \H^0(\O_Q(p)(1, 2)) \oplus \H^0(\O_Q(p)) \lra \H^0(\O_Q(p)(2, 2)) \oplus \H^0(\O_Q(p)(1, 3)) \\
\lra & \Ext^1(\O_Q(p), \O_Q(p)) \lra \H^1(\O_Q(p)(1, 2)) \oplus \H^1(\O_Q(p)) \lra \H^1(\O_Q(p)(2, 2)) \oplus \H^1(\O_Q(p)(1, 3)) \lra 0.
\end{align*}
Since $\Hom(\O_Q(p), \O_Q(p)) \simeq \CC$, it follows that
\[
\dim^{}_{\CC} \Ext^1(\O_Q(p), \O_Q(p)) = 1 - \chi(\O_Q(p)(1, 2)) - \chi(\O_Q(p)) + \chi(\O_Q(p)(2, 2)) + \chi(\O_Q(p)(1, 3)) = 13.
\]
According to \cite[Th\'eor\`eme 3.12]{he}, $\Ext^1(\Lambda_3, \Lambda_3)$ is isomorphic to the tangent space at $\Lambda_3$ of $\M^5(3m+2n)$,
which, according to Remark \ref{flipping_base}, is smooth of dimension $12$.
We obtain the vanishing of $\Ext^2(\Lambda_3, \Lambda_3)$.
\end{proof}

\begin{theorem}
\label{wall_crossing}
Let $\MM^{\alpha}$ be the moduli space of $\alpha$-semi-stable pairs on $\PP^1 \times \PP^1$ with Hilbert polynomial
$P(m, n) = 4m + 2n + 1$. We have the following blowing up diagrams
\[
\xymatrix
{
& \ \ \, \widetilde{\MM}^{\infty} \ar[dl]_-{\beta_{\infty}} \ar[dr]^-{\beta_{11}} & & & \ \ \ \widetilde{\MM}^{5+} \ar[dl]_-{\beta_5} \ar[dr]^-{\beta_0} \\
\MM^{\infty} \ar[dr]_-{\rho_{\infty}} & & \MM^{11-} \ar[dl]^-{\rho_{11}} \ar@{=}[r] & \MM^{5+} \ar[dr]_-{\rho_5} & & \MM^{0+} \ar[dl]^-{\rho_0} \\
& \MM^{11} & & & \MM^5
}
\]
Here $\beta_{\infty}$ is the blow-up along $F^{\infty}$
and $\beta_{11}$ is the contraction of the exceptional divisor $\widetilde{F}^{\infty}$ in the direction of $\PP^3$,
where we view $\widetilde{F}^{\infty}$ as a $\PP^3 \times \PP^1$-bundle with base $\M^{11}(3m + 2n - 1) \times \M(m + 2)$.
Likewise, $\beta_5$ is the blow-up along $F^5$ and $\beta_0$ is the contraction of the exceptional divisor
$\widetilde{F}^5$ in the direction of $\PP^2$, where we view $\widetilde{F}^5$ as a $\PP^2 \times \PP^1$-bundle over
$\M^5(3m + 2n) \times \M(m + 1)$.
\end{theorem}

\begin{proof}
A birational morphism $\beta_{11} \colon \widetilde{\MM}^{\infty} \to \MM^{11-}$ can be constructed as at \cite[Theorem 3.3]{choi_chung}
such that $\beta_{11}$ contracts $\widetilde{F}^{\infty}$ in the direction of $\PP^3$, $\beta_{11}$ is an isomorphism outside $F^{11}$,
and $\beta_{11}^{-1}(x) \simeq \PP^3$ for any $x \in F^{11}$.
We now apply the Universal Property of the blow-up \cite[p. 604]{griffiths_harris} to deduce that $\beta_{11}$ is a blow-up with center $F^{11}$.
For this we need to know that $\MM^{11-}$ and $F^{11}$ are smooth.
By Corollary \ref{M_infinity_smooth}, $\MM^{\infty}$ is smooth,
by Proposition \ref{flipping_bundles}, the blowing up center $F^{\infty}$ is smooth, hence $\widetilde{\MM}^{\infty}$ is smooth, too.
Since $\beta_{11}$ is an isomorphism outside $F^{11}$, $\MM^{11-} \setminus F^{11}$ is smooth.
Since all points of $\MM^{11-}$ are $\alpha$-stable, we can apply the Smoothness Criterion \cite[Th\'eor\`eme 3.12]{he},
which states that $\Lambda \in \MM^{11-}$ is a smooth point if $\Ext^2(\Lambda, \Lambda) = \{ 0 \}$.
Thus, in view of Lemma \ref{ext^2}(i), $\MM^{11-}$ is smooth at every point of $F^{11}$.
The smoothness of $F^{11}$ was proved at Proposition \ref{flipping_bundles}.

For the second blow-up diagram we reason analogously, using the facts that $F^5$ and $F^0$ are smooth, and using Lemma \ref{ext^2}(ii).
\end{proof}

\noindent
According to \cite[Th\'eor\`eme 4.3]{he}, there is a universal family $(\widetilde{\Gamma}, \widetilde{\F})$
of coherent systems on $\MM^{0+} \times \PP^1 \times \PP^1$.
In particular, $\widetilde{\F}$ is a family of semi-stable sheaves on $\PP^1 \times \PP^1$ with Hilbert polynomial $4m + 2n + 1$,
which is flat over $\MM^{0+}$. It induces the so called \emph{forgetful morphism} $\phi \colon \MM^{0+} \to \MM$.
We have $\phi(\Gamma, \F) = [\F]$.

\begin{proposition}
\label{blow_up}
The forgetful morphism $\phi \colon \MM^{0+} \to \MM$ is a blow-up with center the Brill-Noether locus $\MM_2$.
\end{proposition}

\begin{proof}
According to Proposition \ref{vanishing}(ii), for $[\F] \in \MM \setminus \MM_2$ we have $\H^0(\F) \simeq \CC$,
hence $\phi^{-1}([\F]) = (\H^0(\F), \F)$ is a single point. Thus, $\phi$ is an isomorphism away from $\MM_2$.
According to Proposition \ref{M_2}, for $[\F] \in \MM_2$ we have $\H^0(\F) \simeq \CC^2$, hence $\phi^{-1}([\F]) \simeq \PP^1$.
Taking into account that $\MM$ and $\MM_2$ are smooth, we can apply the Universal Property of the blow-up \cite[p. 604]{griffiths_harris}
to conclude that $\phi$ is a blow-up with center $\MM_2$.
\end{proof}

\noindent \\
\emph{Proof of Theorem \ref{poincare_polynomial}}.
By virtue of Proposition \ref{blow_up}, we have the relation
\[
\Poly(\MM) = \Poly(\MM^{0+}) - \xi \Poly(\MM_2).
\]
According to Proposition \ref{M_2}, we have the relation
\[
\Poly(\MM_2) = \Poly(\PP^{13}) \Poly(\PP^1 \times \PP^1).
\]
By virtue of Theorem \ref{wall_crossing}, we have the relation
\begin{align*}
\Poly(\MM^{0+}) = \Poly(\MM^{\infty}) + & \big(\Poly(\PP^1) - \Poly(\PP^3)\big) \Poly\!\big(\M^{11}(3m + 2n - 1) \times \M(m + 2)\big) \\
+ & \big(\Poly(\PP^1) - \Poly(\PP^2)\big) \Poly\!\big(\M^5(3m + 2n) \times \M(m + 1)\big).
\end{align*}
In view of Corollary \ref{M_infinity_smooth} and Remark \ref{flipping_base}, we have the relation
\[
\Poly(\MM^{0+}) = \Poly(\PP^{11}) \Poly(\Hilb_{\PP^1 \times \PP^1}(3)) + (\Poly(\PP^1) - \Poly(\PP^3)) \Poly(\PP^{11}) \Poly (\PP^1)
+ (\Poly(\PP^1) - \Poly(\PP^2)) \Poly(\PP^{10}) \Poly(\PP^1 \times \PP^1) \Poly(\PP^1).
\]
According to \cite[Theorem 0.1]{goettsche}, we have the equation
\[
\Poly(\Hilb_{\PP^1 \times \PP^1}(3)) = \xi^6 + 3 \xi^5 + 9 \xi^4 + 14 \xi^3 + 9 \xi^2 + 3 \xi + 1.
\]
The final result reads
\begin{multline*}
\Poly(\MM) = \frac{\xi^{12} - 1}{\xi - 1} (\xi^6 + 3 \xi^5 + 9 \xi^4 + 14 \xi^3 + 9 \xi^2 + 3 \xi + 1) - (\xi^3 + \xi^2) \frac{\xi^{12} - 1}{\xi - 1} (\xi + 1) \\
- \xi^2 \frac{\xi^{11} - 1}{\xi - 1} (\xi + 1)^3 - \xi \frac{\xi^{14} - 1}{\xi - 1} (\xi + 1)^2. \qed
\end{multline*}

\noindent \\
{\bf Acknowledgement.}
The author would like to thank Jean-Marc Dr\'ezet for several helpful discussions.


\end{document}